\newtheorem{theorem}{Theorem}[section]
\newtheorem{lemma}{Lemma}[section]
\newtheorem{corollary}[theorem]{Corollary}
\newtheorem{proposition}[theorem]{Proposition}
\newtheorem{example}{Example}
\newcommand{\bbm}{\begin{bmatrix}}
\newcommand{\ebm}{\end{bmatrix}}
\begin{document}

\title{\uppercase{Fractal Structure of Parametric Cantor Sets With a Common Point}}

\author{
  Xinyi Meng
    \thanks{Address of the first author College of Mathematics and Statistics, Chongqing University, Chongqing 401331, China. }
}

\date{}

\maketitle

\begin{abstract}
For $\lambda>0$, let $E_{\lambda}$ be the self-similar set generated by the iterated function system (IFS) $\left \{ \frac{x}{3}, \frac{x+\lambda}{3} \right \}$. In this paper we study the structure of parameters $\lambda$ in which $E_\lambda$ contains a common point.  $E_{\lambda}$. More precisely, for a given point $x>0$ we consider the topology of the parameter set $\Lambda \left ( x \right ) =\left \{ \lambda >0:x\in E_{\lambda } \right \}$. We show that $\Lambda \left ( x \right )$ is a Lebesgue null set contains neither interior points nor isolated points, and the Hausdorff dimension of $\Lambda \left ( x \right ) $ is $ \log 2/ \log 3 $. Furthermore, we consider the set $\Lambda_{\mathrm {not}}(x)$ which consists of all parameters $\lambda$ that the digit frequency of $x$ in base $\lambda$ does not exist. We also consider the set $\Lambda_p(x)$ consisting of all $\lambda$ in which the digit frequency of $2$ in the base $\lambda$ expansion of $x$ is $p$. We show that the Hausdorff dimension of $ \Lambda _{\mathrm {not}} \left( x \right) $ is $\log2 /\log 3$ and the lower bound Hausdorff dimension of $ \Lambda _{p} \left( x \right) $ is $-p\log_3 p-(1-p)\log_3(1-p)$.

\noindent\textbf{Keywords:} self-similar set; fractal dimensions; digit frequency
\end{abstract}

\section{Introduction}

For $\lambda >0$ let $E_{\lambda }$ be the self-similar set generated by the iterated function system (IFS) $\Psi _{\lambda } :=\left \{ f_{0}\left ( x \right )=\frac{x}{3}, f_{1}\left (x\right )=\frac{x+\lambda}{3} \right \} $. Then $E_{\lambda }$ is the unique nonempty compact set satisfying (cf.\cite{K14}) 
\begin{align*}
	E_{\lambda } = \left \{ \sum_{i=1}^{\infty } \frac{d_{i}}{3^{i}} : d_{i}\in \left \{ 0,\lambda \right \} ,i\in \mathbb{N}  \right \} .
\end{align*}
Clearly, 0 is a common point of $E_{\lambda }$ for all $\lambda >0 $. For other $x>0$ it is natural to ask how likely the self-similar sets $E_{\lambda }\left( \lambda >0\right) $ contain the common point $x$ ? 

Note that 
\begin{align}\label{E_lambda-Cantor}
	E_{\lambda }=\left \{ \frac{\lambda }{2}\sum_{i=1}^{\infty}\frac{d_{i} }{3^{i} }:d_{i}\in \left \{ 0,2 \right \} ,i\in \mathbb{N} \right \}=\frac{\lambda }{2}\mathrm{C_{1/3} },
\end{align}
where $\mathrm{C_{1/3}}$ is the middle third Cantor set. Clearly, the Hausdorff dimension of $\mathrm{C_{1/3}}$ is $\log 2/ \log 3$, then the Hausdorff dimension of $ E_{\lambda}$ is also $\log 2/ \log 3$ (cf.\cite{K85}).

Given $x>0$, let 
\begin{align*}
	\Lambda \left ( x \right ) =\left \{ \lambda >0:x\in E_{\lambda } \right \} .
\end{align*}
Then $\Lambda \left( x\right) $ consists of all $\lambda >0 $ such that $x$ is the common point of $E_{\lambda }$. It is interesting to study $\Lambda \left( x\right) $ for $x>0$.

Note that for $\lambda >0$, the IFS $\Psi_{\lambda}$ satisfies the open set condition (cf.\cite{CJ17}). So there exists a natural bijective map from the symbolic space $\Omega :=\left \{ 0,2 \right \} ^{\mathbb{N} } $ to the self-similar set $E_{\lambda }$, denoted by $\pi_{\lambda }$. Motivated by the works of \cite{ZJDW21}, there also exists a bijection from the parameter set $\Lambda \left( x\right) $ to the symbolic space $\Omega $, denoted by $\Phi_{x}$. See more details in Section 2.

Another motivation to study $\Lambda \left( x\right) $ is from the works of Jiang, Kong and Li \cite{JDW21}, where they studied the self-similar set $K_{\lambda }$ generated by the IFS $\left \{\lambda \left ( x+i \right ) :i=0,\cdots ,m-1 \right \}$ for a given integer $m \ge 2$ and $\lambda \in \left( 0,1/m \right]$. They considered the topological and fractal properties of the set $\Lambda_{*} \left( x\right) :=\left \{ \lambda \in \left ( 0,1/m \right ):x\in K_{\lambda }   \right \}$, showed that for any $x\in \left( 0,1\right) $ the set $\Lambda_{*} \left( x\right)$ is a topological Cantor set with zero Lebesgue measure and full Hausdorff dimension. Morover, the minimal value of $\Lambda_{*} \left( x\right)$ is $\frac{x}{m-1+x} $ and the maximal value of $\Lambda_{*} \left( x\right)$ is $1/m$.

Actually, the unique beta expansions is an effective tool to solve this kind of problems. Some recent progress on these can be found in \cite{VP07,CCS12,PD21,DWFZJ20} and the references therein.

Recall that a set $F\subset \mathbb{R} $ is called a Cantor set if it is a non-empty compact set containing neither interior nor isolated points. Our first result considers the topology of $\Lambda \left( x\right) $.

\begin{theorem}\label{Topology-of-lambda(x)}
	For any $x>0$, the set $\Lambda \left ( x \right ) $ is a Lebesgue null set contains neither interior points nor isolated points with $\min\Lambda \left( x\right)  = 2x$ and $\sup\Lambda \left( x\right)  = \infty $.
\end{theorem}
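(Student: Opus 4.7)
The plan is to exploit the scaling identity $E_\lambda = \tfrac{\lambda}{2}\,\mathrm{C_{1/3}}$ from (\ref{E_lambda-Cantor}) and reduce every assertion to a corresponding property of the middle-third Cantor set. First I would observe that $x\in E_\lambda$ is equivalent to $\tfrac{2x}{\lambda}\in \mathrm{C_{1/3}}$, and since $\mathrm{C_{1/3}}\subset[0,1]$ with $x,\lambda>0$, this forces $\tfrac{2x}{\lambda}\in \mathrm{C_{1/3}}\cap(0,1]$ and hence $\lambda\in[2x,\infty)$. Defining $\varphi:(0,1]\to[2x,\infty)$ by $\varphi(y)=\tfrac{2x}{y}$, I obtain
\[
  \Lambda(x)=\varphi\bigl(\mathrm{C_{1/3}}\cap(0,1]\bigr),
\]
where $\varphi$ is a smooth, strictly decreasing homeomorphism that is bi-Lipschitz on every compact subinterval of $(0,1]$. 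All four claims in the theorem will be read off from this single identification.

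The endpoint statements follow immediately. Since $1\in \mathrm{C_{1/3}}$ we get $2x=\varphi(1)\in\Lambda(x)$, and for every other $y\in \mathrm{C_{1/3}}\cap(0,1]$ we have $\varphi(y)>2x$, so $\min\Lambda(x)=2x$. On the other hand, $0$ is an accumulation point of $\mathrm{C_{1/3}}$, so taking $y_n\in \mathrm{C_{1/3}}\setminus\{0\}$ with $y_n\to 0^+$ yields $\varphi(y_n)\to\infty$, giving $\sup\Lambda(x)=\infty$.

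For the topological claims I would use that $\mathrm{C_{1/3}}$ is a Cantor set, hence has neither interior points nor isolated points. Deleting the single point $0$ preserves both properties because $0$ is itself a non-isolated point of $\mathrm{C_{1/3}}$, so no point of $\mathrm{C_{1/3}}\cap(0,1]$ becomes isolated and no interior is created. Since $\varphi$ is a homeomorphism and both ``interior point'' and ``isolated point'' are topological invariants, $\Lambda(x)=\varphi(\mathrm{C_{1/3}}\cap(0,1])$ inherits both negative properties.

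For the Lebesgue null claim I would write $\mathrm{C_{1/3}}\cap(0,1]=\bigcup_{n\ge1}\bigl(\mathrm{C_{1/3}}\cap[\tfrac1n,1]\bigr)$; on each compact interval $[\tfrac1n,1]$ the map $\varphi$ is Lipschitz, hence sends the null set $\mathrm{C_{1/3}}\cap[\tfrac1n,1]$ to a null subset of $[2x,2xn]$, and countable additivity concludes the argument. The only mild point of care is the boundary $y\to 0^+$ (equivalently $\lambda\to\infty$): both the ``no isolated points'' statement and the null-set statement require this countable exhaustion, but there is no genuine obstacle since $0$ fails to be isolated in $\mathrm{C_{1/3}}$ and every question can be localised to a compact subinterval of $(0,1]$ on which $\varphi$ is bi-Lipschitz.
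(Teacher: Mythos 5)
Your proof is correct, but it takes a genuinely different route from the paper. You collapse everything into the single identification $\Lambda(x)=\varphi\left(\mathrm{C}_{1/3}\cap(0,1]\right)$ with $\varphi(y)=2x/y$, and then import the known properties of the middle-third Cantor set through a map that is a decreasing homeomorphism and locally bi-Lipschitz on compact subintervals of $(0,1]$. The paper instead builds the symbolic coding $\Phi_{x}:\Lambda(x)\to\Omega$, proves it is a strictly decreasing bijection (Lemmas \ref{bijection}--\ref{montonicity}), reads off $\min\Lambda(x)=2x$ and $\sup\Lambda(x)=\infty$ from the extreme codings $2^{\infty}$ and $0^{n-1}20^{\infty}$, and establishes the two negative topological properties by explicit digit manipulations (inserting a non-Cantor expansion between two parameters for Proposition \ref{no interior points}, and perturbing the $(n+1)$-st digit to build $\lambda_n\to\lambda$ for Proposition \ref{no isolated points}). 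Your argument is shorter and, notably, actually proves the Lebesgue-null assertion (via countable exhaustion by $[\tfrac1n,1]$ and Lipschitz images of null sets), which the paper's concluding proof of Theorem \ref{Topology-of-lambda(x)} never addresses — it only cites the extreme-value lemmas and the two propositions. What the paper's longer route buys is the coding apparatus $\Phi_{x}$ itself, which is reused throughout Sections 3--5 and in the geometric construction of Example \ref{Example-Lambda(x)}. Two small points of care in your write-up: the reason deleting $0$ cannot isolate a point $y>0$ is simply that a neighbourhood of $y$ can be shrunk to avoid $0$, so isolation in $\mathrm{C}_{1/3}\cap(0,1]$ is equivalent to isolation in $\mathrm{C}_{1/3}$ (your stated reason, that $0$ is non-isolated, is not quite the relevant fact); and ``interior point'' is relative to $\mathbb{R}$ rather than to the subspace, so you should note that $\varphi$ is a homeomorphism of the ambient intervals $(0,1]\to[2x,\infty)$ and handle the endpoint $\lambda=2x$ (which is the minimum and hence trivially not interior) separately. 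Neither issue affects the validity of the argument.
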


In fact, Theorem \ref{Topology-of-lambda(x)} suggests that for any $q>2x$, the set $\Lambda \left( x\right) \cap \left[2x,q \right] $ can be obtained by successively removing a sequence of open intervals from the closed interval between $\min\Lambda(x)$ and $\max\Lambda(x)$. At the end of Section 2 we will introduce a geometrical construction of $\Lambda \left( x\right) $ (see Figure \ref{fig:1}).

\begin{figure}[!htp]
	\centering 
	\includegraphics[width=0.5\textwidth]{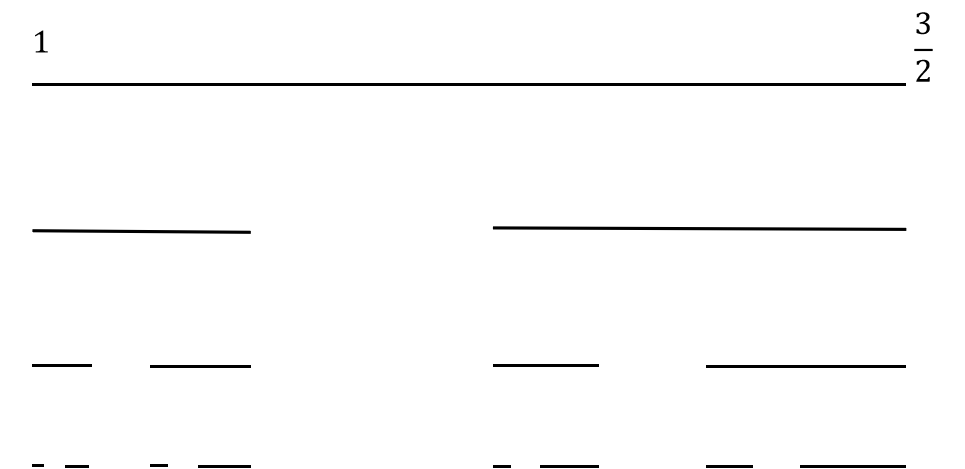}
	\caption{A geometrical construction of $\Lambda \left( x\right)\cap \left[ 0, 2 \right] $ with $x=1/2$.}
	\label{fig:1}
\end{figure}

Our next result states that the local dimension of $\Lambda \left( x\right) $.

\begin{theorem}\label{local-dimension}
	Let $x>0$. Then for any $\lambda \in \Lambda \left( x\right) $ and $\delta \in \left( 0,\lambda \right) $ we have
	\begin{align*}
		\dim_{\mathrm{H}} \left ( \Lambda \left ( x \right ) \cap \left ( \lambda - \delta ,\lambda +  \delta  \right )  \right ) = \dim_{\mathrm{H}} E_{\lambda }= \frac{\log 2}{\log 3} .
	\end{align*}
\end{theorem}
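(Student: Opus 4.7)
The plan is to transfer the problem to the classical middle-third Cantor set and then exploit its self-similarity. By \eqref{E_lambda-Cantor}, $x\in E_\lambda$ holds if and only if $2x/\lambda\in\mathrm{C_{1/3}}$, so the inversion $\phi(y):=2x/y$ gives a bijection between $\mathrm{C_{1/3}}\cap(0,1]$ and $\Lambda(x)$. Fix $\lambda\in\Lambda(x)$ and $\delta\in(0,\lambda)$, set $y_0:=2x/\lambda\in\mathrm{C_{1/3}}$, and let $I_\delta:=\phi^{-1}((\lambda-\delta,\lambda+\delta))\cap(0,1]$, which is an open interval around $y_0$. Since $\phi$ is a $C^1$ diffeomorphism of $(0,1]$ onto $[2x,\infty)$ with derivative $\phi'(y)=-2x/y^2$, and since $y$ stays in a compact subinterval of $(0,1]$ on $I_\delta$, the restriction $\phi|_{I_\delta}$ is bi-Lipschitz. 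Bi-Lipschitz maps preserve Hausdorff dimension, hence
\begin{align*}
\dim_{\mathrm{H}}\bigl(\Lambda(x)\cap(\lambda-\delta,\lambda+\delta)\bigr)=\dim_{\mathrm{H}}\bigl(\mathrm{C_{1/3}}\cap I_\delta\bigr).
\end{align*}

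It therefore suffices to establish the local-dimension statement: for every $y_0\in\mathrm{C_{1/3}}$ and every open interval $I\ni y_0$, $\dim_{\mathrm{H}}(\mathrm{C_{1/3}}\cap I)=\log 2/\log 3$. The upper bound is immediate from $\dim_{\mathrm{H}}\mathrm{C_{1/3}}=\log 2/\log 3$. For the lower bound I would invoke self-similarity directly. Expand $y_0=\sum_{i\ge 1}c_i/3^i$ with $c_i\in\{0,2\}$, and put $g_0(t)=t/3$, $g_2(t)=(t+2)/3$. Then for each $n$ the $n$th-level cylinder $J_n:=g_{c_1}\circ\cdots\circ g_{c_n}(\mathrm{C_{1/3}})$ is a similar copy of $\mathrm{C_{1/3}}$ of diameter $3^{-n}$ that contains $y_0$. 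Choosing $n$ so large that $J_n\subset I$ gives $J_n\subseteq\mathrm{C_{1/3}}\cap I$, and similarity invariance of Hausdorff dimension yields $\dim_{\mathrm{H}}(\mathrm{C_{1/3}}\cap I)\ge\dim_{\mathrm{H}}J_n=\log 2/\log 3$, completing the reduction.

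The main technical point is really just the bi-Lipschitz step: one must verify that $\phi$ is non-degenerate on the full pullback $I_\delta$. This is where the assumption $\delta<\lambda$ plays a genuine role, because $\lambda-\delta>0$ keeps $y$ bounded away from $0$ and $+\infty$ on $I_\delta$, and hence $|\phi'|$ between two positive constants. Once this is in place, the self-similar cylinder construction is entirely standard, and I expect no deeper obstacle: Theorem \ref{local-dimension} is essentially a dimension-preserving transfer of the well-known fact that the middle-third Cantor set has full local Hausdorff dimension at each of its points.
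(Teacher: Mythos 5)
Your proposal is correct and follows essentially the same route as the paper: the paper's Lemma \ref{C=C} establishes the full local dimension of $\mathrm{C_{1/3}}$ on the pulled-back interval $\left(\frac{2x}{\lambda+\delta},\frac{2x}{\lambda-\delta}\right)$ via a cylinder-and-similarity argument, and Propositions \ref{f-is-bijection} and \ref{the-Lip-of-f} verify that $y\mapsto 2x/y$ is a bi-Lipschitz bijection onto $\Lambda(x)\cap(\lambda-\delta,\lambda+\delta)$ with exactly your constants $(\lambda\mp\delta)^2/(2x)$. Your cylinder argument is in fact slightly cleaner than the paper's, which splits into two cases according to whether the coding ends in $2^\infty$, a distinction that is not really needed since any level-$n$ cylinder containing $y_0$ has diameter $3^{-n}$ and so eventually lies in the open interval.
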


Inspired by the works of Albeverio, Pratsiovytyi and Torbin \cite{SMG05}, and the works of Huang, Kong \cite{YD23}, for a probability vector $\mathbf{p}:= \left(p,1-p \right) $ we define the sets
\begin{align*}
	\Lambda _{\mathrm{not}} \left ( x \right ) =\left \{ \lambda \in \Lambda \left ( x \right ):\underline{freq}_{i}\Phi _{x}\left ( \lambda  \right ) \ne \overline{freq} _{i}\Phi _{x}\left ( \lambda  \right ) ,i\in \left \{ 0,2 \right \}  \right \} ,
\end{align*}
and
\begin{align*}
	\Lambda _{p} \left ( x \right ) :=\left \{ \lambda \in \Lambda \left ( x \right ):freq_{2} \Phi _{x}\left ( \lambda \right ) = p,~freq_{0} \Phi _{x}\left ( \lambda  \right )= 1-p \right \} 
\end{align*}
where $freq_{i}\Phi_{x}\left( \lambda \right) \left(i=0,2 \right) $ denote the frequency of the digits 0 or 2 in $\Phi _{x}\left( \lambda \right) $.

Our next result is to consider the dimension of $\Lambda _{\mathrm{not} } \left ( x \right ) $.

\begin{theorem}\label{dimension-of-NOT}
	Let $x>0$, we have
	\begin{align*}
		\dim_{\mathrm{H}}\left( \Lambda _{\mathrm{not}}  \left ( x \right ) \right) = \frac{\log 2}{\log 3} .
	\end{align*}
\end{theorem}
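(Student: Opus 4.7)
The plan is to establish matching upper and lower bounds of $\log 2/\log 3$. The upper bound is immediate from Theorem \ref{local-dimension}: since $\Lambda_{\mathrm{not}}(x) \subseteq \Lambda(x)$, one can cover $\Lambda(x)$ by countably many intervals $(\lambda - \delta, \lambda + \delta)$ with $\lambda$ in a countable dense subset of $\Lambda(x)$, each of which intersects $\Lambda(x)$ in a set of Hausdorff dimension $\log 2/\log 3$. Countable $\sigma$-stability of Hausdorff dimension then yields $\dim_{\mathrm{H}} \Lambda_{\mathrm{not}}(x) \leq \dim_{\mathrm{H}} \Lambda(x) \leq \log 2/\log 3$.

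For the lower bound, the idea is to build a subset of $\Lambda_{\mathrm{not}}(x)$ of Hausdorff dimension arbitrarily close to $\log 2/\log 3$ by pulling back through $\Phi_x$ a suitable subset of $\Omega$. Fix $p\in(0,1/2)$ and a rapidly growing sequence $0 = n_0 < n_1 < n_2 < \cdots$ with $n_k/n_{k-1}\to \infty$. Let $A(p) \subseteq \Omega$ consist of those $\omega$ for which the empirical frequency of digit $2$ on the $k$-th block $(\omega_{n_{k-1}+1},\ldots,\omega_{n_k})$ lies in a shrinking neighborhood of $p$ when $k$ is odd and of $1-p$ when $k$ is even. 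Because $n_k/n_{k-1}\to\infty$, the $k$-th block dominates the average up to time $n_k$, so any $\omega\in A(p)$ satisfies $\underline{freq}_2\omega \leq p < 1-p \leq \overline{freq}_2\omega$, hence $A(p)\subseteq \Phi_x(\Lambda_{\mathrm{not}}(x))$. Putting the Bernoulli$(p,1-p)$ measure on odd blocks and Bernoulli$(1-p,p)$ on even blocks and applying the mass distribution principle in the ultrametric space $(\Omega,d)$ with $d(\omega,\omega')=3^{-|\omega\wedge\omega'|}$, one obtains the Eggleston--Billingsley-type estimate
\begin{align*}
\dim_{\mathrm{H}} A(p) \;\geq\; -p\log_3 p - (1-p)\log_3 (1-p),
\end{align*}
which tends to $\log 2/\log 3$ as $p \to 1/2$.

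The remaining and main step is to transfer this bound from $A(p)\subseteq\Omega$ to $\Phi_x^{-1}(A(p))\subseteq \Lambda_{\mathrm{not}}(x)$. Using the geometric construction of $\Lambda(x)$ described in Section 2 (cf.\ Figure \ref{fig:1}), I would show that for each $\lambda_0 \in \Lambda(x)$ there exist constants $c_1, c_2 > 0$ and a cylinder $[\omega_1^*\ldots\omega_N^*]$ whose $\Phi_x^{-1}$-image is a small neighborhood of $\lambda_0$ in $\Lambda(x)$, such that for every extension $\omega$ of $\omega^*$ and every $n \geq N$,
\begin{align*}
c_1 \cdot 3^{-n} \;\leq\; \operatorname{diam}\Phi_x^{-1}([\omega_1\ldots\omega_n]) \;\leq\; c_2 \cdot 3^{-n}.
\end{align*}
This two-sided comparability—essentially the geometric content underlying Theorem \ref{local-dimension}—makes $\Phi_x^{-1}$ bi-Lipschitz with uniform constants on each such piece and therefore dimension-preserving. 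The principal obstacle is establishing these estimates carefully from the removed-interval (Cantor) structure of $\Lambda(x)$ near a general $\lambda_0$. Once this is in hand, applying the estimate inside a neighborhood of any fixed $\lambda_0 \in \Lambda(x)$ and letting $p\to 1/2$ yields $\dim_{\mathrm{H}}\Lambda_{\mathrm{not}}(x) \geq \log 2/\log 3$, completing the proof.
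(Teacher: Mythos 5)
Your proposal is correct and reaches the theorem by a genuinely different lower-bound construction than the paper's. The transfer step that you flag as the principal obstacle is in fact the easy part: since $\Phi_x^{-1}\left((d_i)\right) = 2x/\sum_{i}d_i3^{-i}$, the identity $\left|2x/y_1 - 2x/y_2\right| = \frac{2x}{y_1y_2}\left|y_1-y_2\right|$ makes $\Phi_x^{-1}$ bi-Lipschitz on any cylinder $[d_1\cdots d_N]\neq[0^N]$ (where $\sum_i d_i3^{-i}$ is bounded away from $0$), with no need to analyse the removed-interval structure; this is exactly Proposition \ref{the-Lip-of-f}, reused by the paper as Lemma \ref{Lip-Property-in-lambda}. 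You do need that restriction to a nonzero cylinder, since sequences in $A(p)$ beginning with long runs of $0$s correspond to arbitrarily large $\lambda$, but your localisation near a fixed $\lambda_0$ handles it. The real difference is the choice of symbolic subset: the paper builds a homogeneous Moran set $\Sigma_{q,n}(x)$ by inserting marker blocks $0^{2^{m+1}}2^{2^m}$ of controlled relative density to destroy the digit frequency, and lower-bounds its dimension by $\frac{(q-1)\log 2}{(q+1)\log 3}$ via the Feng--Wen--Wu formula before letting $q\to\infty$; you use the classical oscillating-frequency set $A(p)$ with alternating Bernoulli measures on rapidly growing blocks and the Billingsley/mass-distribution principle, obtaining $\mathrm{h}(p,1-p)/\log 3$ and letting $p\to 1/2$. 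Both are sound. Yours avoids the paper's combinatorial bookkeeping (verifying that the frequency fails to exist along the two subsequences $r_m$ and $r_m-2^{m-1}$), at the price of two routine verifications you should not omit: that $A(p)$ carries full measure for your alternating Bernoulli measure (shrink the tolerance windows slowly enough that a blockwise Borel--Cantelli argument applies), and that the local-dimension liminf is controlled at all times $n$ rather than only at block endpoints (this works because $\mathrm{h}(p,1-p)=\mathrm{h}(1-p,p)$, so the per-symbol entropy is the same on every block). The upper bound is identical in both arguments, namely monotonicity together with Corollary \ref{dimension-of-Lambda(x)}.
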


The last result is the lower bound dimension of $\Lambda _{p} \left ( x \right ) $.

\begin{theorem}\label{dimension-of-p}
	Let $x>0$, for any $ p\in \left ( 0,1 \right ) $ we have
	\begin{align*}
		\dim_{\mathrm{H}}\left( \Lambda _{p} \left ( x \right )\right)  \ge \frac{\mathrm{h} \left ( p, 1-p \right ) }{\log 3} ,
	\end{align*}
	where $\mathrm{h} \left ( p, 1-p \right ) := -p\log p - \left ( 1-p \right ) \log \left ( 1-p \right ) $ is the entropy function of the probability vector $\mathbf{p}$.
\end{theorem}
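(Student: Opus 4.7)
My plan is to establish the lower bound by the standard measure-theoretic route: construct a Bernoulli measure on the symbolic space $\Omega$, push it forward to $\Lambda(x)$ via $\Phi_x^{-1}$, and invoke Billingsley's mass distribution principle. Equip $\Omega=\{0,2\}^{\mathbb{N}}$ with the Bernoulli product measure $\mu_p$ that assigns mass $p$ to the symbol $2$ and $1-p$ to the symbol $0$, so that
$$\mu_p([d_1\cdots d_n])=p^{k_n(d)}(1-p)^{n-k_n(d)},\qquad k_n(d):=\#\{1\le i\le n:d_i=2\}.$$
By the strong law of large numbers, $k_n/n\to p$ for $\mu_p$-a.e.\ $d$, so $\mu_p$ is concentrated on the sequences whose digit frequency of $2$ equals $p$, which corresponds via $\Phi_x^{-1}$ exactly to $\Lambda_p(x)$ (up to a $\mu_p$-null set). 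Push $\mu_p$ forward to obtain a Borel probability measure $\nu_p:=\mu_p\circ\Phi_x$ supported on $\Lambda_p(x)$.

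Next, I would quantify the size of the natural cylinder intervals in $\Lambda(x)$. Given $(d_1,\ldots,d_n)\in\{0,2\}^n$, not all zero, the preimage $\Phi_x^{-1}([d_1\cdots d_n])$ lies in the interval
$$J_n(d):=\left[\frac{2x}{S_n(d)+3^{-n}},\,\frac{2x}{S_n(d)}\right],\qquad S_n(d):=\sum_{i=1}^n\frac{d_i}{3^i},$$
of length $|J_n(d)|=2x\cdot 3^{-n}/\bigl[S_n(d)(S_n(d)+3^{-n})\bigr]$. Since $p>0$, for $\mu_p$-a.e.\ $d$ the digit $2$ occurs infinitely often and hence $S_n(d)\to S(d):=\sum_{i\ge 1}d_i/3^i\in(0,1]$. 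Combined with the SLLN applied to $-\log\mu_p([d_1\cdots d_n])=-k_n\log p-(n-k_n)\log(1-p)$, this yields, $\mu_p$-almost surely,
$$\lim_{n\to\infty}\frac{-\log\mu_p([d_1\cdots d_n])}{n}=\mathrm{h}(p,1-p)\quad\text{and}\quad\lim_{n\to\infty}\frac{-\log|J_n(d)|}{n}=\log 3,$$
so the local dimension of $\nu_p$ at $\nu_p$-a.e.\ point equals $\mathrm{h}(p,1-p)/\log 3$.

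Finally, I would apply the mass distribution principle to conclude $\dim_{\mathrm{H}}\Lambda_p(x)\ge\mathrm{h}(p,1-p)/\log 3$. The principle demands local dimension estimated on balls rather than cylinder intervals, and the main obstacle is that $|J_n(d)|$ is not a uniform multiple of $3^{-n}$: sequences beginning with many $0$'s give very small $S_n(d)$ and hence atypically long intervals. I would handle this by Egorov's theorem: for each $\eta>0$, choose a compact set $\Omega_\eta\subset\Omega$ with $\mu_p(\Omega_\eta)\ge 1-\eta$ on which the two limits above hold uniformly and on which $S(d)\in[a_\eta,b_\eta]$ for some $0<a_\eta\le b_\eta\le 1$. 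On $\Omega_\eta$ the level-$n$ cylinder intervals have comparable lengths $\asymp 3^{-n}$, so a ball $B(\lambda,r)$ centred at $\lambda\in\Phi_x^{-1}(\Omega_\eta)$ with $r\asymp 3^{-n}$ meets only boundedly many such cylinders; this gives $\nu_p(B(\lambda,r))\lesssim r^{\mathrm{h}(p,1-p)/\log 3}$ for $\nu_p$-a.e.\ $\lambda\in\Phi_x^{-1}(\Omega_\eta)$. The mass distribution principle then yields the lower bound, and letting $\eta\to 0$ removes the restriction to $\Omega_\eta$.
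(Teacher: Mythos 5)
Your proposal is correct and follows the same overall strategy as the paper (put a product-type measure on the coding space whose typical sequences have digit-$2$ frequency $p$, compute the local dimension of the induced measure on the parameter set, and conclude by a Billingsley/mass-distribution argument), but the technical execution differs in two genuine ways. First, the paper does not use the plain Bernoulli measure: it restricts to the tree $\mathrm{T}_{k,n}(x)$ of codings in which no aligned block of length $k$ equals $0^{k}$, equips it with a block-conditioned Bernoulli measure with parameter $\theta_{k}$ determined by $\frac{1-\theta_{k}}{1-\theta_{k}^{k}}=p$, proves $\theta_{k}\to 1-p$, and only recovers the entropy bound $\mathrm{h}(p,1-p)/\log 3$ in the limit $k\to\infty$; you get the bound in one shot from the unconditioned Bernoulli measure $\mu_{p}$. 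Second, the paper never works with the parameter-space cylinder intervals $J_{n}(d)$ directly: it fixes a window $(\beta_{n},\gamma_{n})$ determined by the prefix $x_{1}\cdots x_{n}$, shows $\pi_{\lambda}\circ\Phi_{x}$ is bi-Lipschitz there (Lemmas \ref{Lip-Property-in-lambda} and \ref{dimension-equal}), and computes the local dimension inside $E_{\lambda}$, where level-$n$ cylinders have length exactly $\lambda 3^{-n}$, so Billingsley's lemma applies without any uniformity issue. You instead stay in parameter space and handle the non-uniform lengths $|J_{n}(d)|=2x\cdot 3^{-n}/\bigl(S_{n}(d)(S_{n}(d)+3^{-n})\bigr)$ by Egorov, using that $S(d)>0$ almost surely when $p>0$; this works (indeed the uniform lower bound $|J_{n}(d)|\ge 2x\cdot 3^{-n}$ together with disjointness already bounds the number of level-$n$ cylinders a ball of radius $\asymp 3^{-n}$ can meet), and it is arguably cleaner since it avoids both the $\theta_{k}$ machinery and the final limit in $k$. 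The only imprecision worth fixing is that Egorov gives you $\nu_{p}(B(\lambda,r))\le Cr^{(\mathrm{h}(p,1-p)-\eps)/\log 3}$ for each $\eps>0$ on a set of measure $\ge 1-\eta$, so the exponent should carry an $\eps$ that you remove at the end alongside $\eta$.
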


The paper is organized in the following way. In Section 2 we show that $\Lambda \left( x\right) $ is a Lebesgue null set contains neither interior points nor isolated points, and prove Theorem \ref{Topology-of-lambda(x)}. In Section 3 we study the local dimension of $\Lambda \left( x\right) $, and prove Theorem \ref{local-dimension}. In Section 4 we construct subsets of $\Lambda _{\mathrm{not}}\left ( x \right )$ with the Hausdorff dimension arbitrarily close to $\log 2/ \log 3$, and prove Theorem \ref{dimension-of-NOT}. In Section 5 we constructed a measure supported on $\Lambda _{p} \left ( x \right ) $ and prove Theorem \ref{dimension-of-p}.

\section{Topological structure of \( \Lambda ( x )\) }

In this section we will investigate the topology of $\Lambda \left ( x \right )$ and prove Theorem \ref{Topology-of-lambda(x)}. We will give some properties of the mappings and then the topological properties of $\Lambda \left ( x \right )$ can be deduced.

First we recall some terminology from symbolic dynamics (cf.\cite{DB21}). Let $\Omega =\left \{ 0,2 \right \} ^{\mathbb{N}}$ be the set of all infinite sequences of 0s and 2s. For a word we mean a finite string of 0s and 2s. Let $\left\{ 0,2 \right\}^{*}$ be the set of all finite words over the alphabet {0, 2} together with the empty word $\epsilon$. For two words $\mathbf{a}= a_1 a_2\cdots a_k, \mathbf{b}=  b_1 b_2\cdots b_t $ in $\left\{ 0,2 \right\}^{*}$ we write $\mathbf{ab} = a_1 a_2\cdots a_k b_1 b_2\cdots b_t $ for their concatenation. Similarly, for a sequence $\mathbf{c}= c_1 c_2\cdots$ in $\Omega$ we write $\mathbf{ac}= a_1 a_2\cdots a_k c_1 c_2 \cdots$. For $j\in \mathbb{N} $ we denote by $\mathbf{a}^{j}=\mathbf{a}\cdots \mathbf{a}$ the j-fold concatenation of $\mathbf{a}$ with itself, and by $ \mathbf{a}^{\infty} = \mathbf{a} \mathbf{a} \cdots $ the period sequence with period block $\mathbf{a}$. Throughout the paper we use lexicographical order $\prec,\preceq,\succ$ and $\succeq$ between sequences and words. For example, for two sequences $\left ( a_{i} \right ) ,\left ( b_{i} \right ) \in \Omega $, we say $\left( a_{i} \right) \prec \left ( b_{i} \right ) $ if $a_{1} < b_{1}$ or there exists $n\in \mathbb{N} $ such that $a_{1}...a_{n-1} = b_{1}...b_{n-1}$ and $a_{n} < b_{n}$. For two words $\mathbf{a},\mathbf{b}$, we say $\mathbf{a} \prec \mathbf{b}$ if $\mathbf{a}0^{\infty} \prec \mathbf{b}0^{\infty} $.

Note that for $\lambda >0$, the IFS $\Psi_{\lambda}$ satisfies the open set condition. So there exists a natural bijective map from the symbolic space $\Omega $ to the self-similar set $E_{\lambda }$ defined by
\begin{align*}
	\pi_{\lambda}:\Omega \longrightarrow E_{\lambda } ;~~~~~~~~\left ( d_{i}\right ) \longmapsto \frac{\lambda}{2}\sum_{i=1}^{\infty } \frac{d_{i}}{3^{i}}.  
\end{align*}
The infinite sequence $\left( d_{i}\right) $ is called a (unique) coding of $\pi _{\lambda }\left( \left( d_{i}\right) \right) $ with respect to the IFS $\Psi_{\lambda}$. 

Now we fix $x>0$ and define a map from the parameter set $\Lambda \left ( x \right ) $ to the symbolic space $\Omega$ by
\begin{align*}
	\Phi _{x} :\Lambda \left ( x \right ) \longrightarrow \Omega;~~~~~~~~\lambda\longmapsto\left ( d_{i}  \right )~~ s.t.~~x=\frac{\lambda }{2} \sum_{i=1}^{\infty } \frac{d_{i} }{3^{i} } . 
\end{align*}

\begin{lemma}\label{bijection}
	Let $x>0$, $\Phi _{x}$ is a bijective mapping.
\end{lemma}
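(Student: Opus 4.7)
The plan is to verify well-definedness, injectivity, and surjectivity of $\Phi_x$ in turn; each is essentially algebraic, but well-definedness requires a uniqueness-of-coding argument.

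First I would check that $\Phi_x$ is well-defined, i.e.\ that for every $\lambda\in\Lambda(x)$ the coding $(d_i)\in\Omega$ with $x=\frac{\lambda}{2}\sum d_i/3^i$ is unique. The cleanest way is to observe that for any $\lambda>0$ the two cylinders $f_0(E_\lambda)=[0,\lambda/6]$ and $f_1(E_\lambda)=[\lambda/3,\lambda/2]$ are disjoint, so $\Psi_\lambda$ satisfies the strong separation condition and $\pi_\lambda$ is in fact a bijection. Equivalently, if two codings $(a_i)\ne(b_i)$ produced the same point, then at the first index $k$ where they differ (say $a_k=2$, $b_k=0$) the gap $2/3^k$ would have to be compensated by the tail, contradicting the estimate $\sum_{i>k}2/3^i=1/3^k$.

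Injectivity of $\Phi_x$ is then immediate from $x>0$: if $\Phi_x(\lambda_1)=\Phi_x(\lambda_2)=(d_i)$, then
\begin{align*}
\frac{\lambda_1}{2}\sum_{i=1}^{\infty}\frac{d_i}{3^i}=x=\frac{\lambda_2}{2}\sum_{i=1}^{\infty}\frac{d_i}{3^i},
\end{align*}
and since $(d_i)\ne 0^\infty$ (otherwise $x=0$), the common series is strictly positive, forcing $\lambda_1=\lambda_2$.

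For surjectivity, given $(d_i)\in\Omega$ with $(d_i)\ne 0^\infty$, I would set $\lambda:=2x\bigl(\sum_{i\ge 1}d_i/3^i\bigr)^{-1}$, which is strictly positive. Then $x=\frac{\lambda}{2}\sum d_i/3^i\in E_\lambda$ by construction, so $\lambda\in\Lambda(x)$ and $\Phi_x(\lambda)=(d_i)$. The one subtlety worth flagging is the sequence $0^\infty$, which corresponds to $x=0$ and is ruled out by the standing hypothesis $x>0$; modulo this bookkeeping, the main conceptual ingredient is the strong-separation argument underlying uniqueness of coding, and no other obstacle is anticipated.
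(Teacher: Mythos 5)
Your proof is correct and follows essentially the same route as the paper: injectivity from the identity $\lambda = 2x\big(\sum_{i}d_i 3^{-i}\big)^{-1}$ and surjectivity by solving that identity for $\lambda$ given a coding. You are in fact slightly more careful than the paper, which neither verifies well-definedness explicitly (it is delegated to the earlier remark that $\pi_\lambda$ is bijective under the open set condition) nor addresses the sequence $0^\infty\in\Omega$, which lies outside the image of $\Phi_x$ in both arguments and, strictly speaking, means surjectivity onto all of $\Omega$ fails unless $0^\infty$ is excluded from the codomain.
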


\begin{proof}
	Let $x>0$ and $\lambda_{1},\lambda_{2} \in \Lambda \left( x\right) $. Set $\left ( a_{i}  \right ) :=\Phi _{x} \left ( \lambda _{1}  \right ) $,$\left ( b_{i}  \right ) :=\Phi _{x} \left ( \lambda _{2}  \right ) $. If $\Phi _{x} \left ( \lambda _{1}  \right ) =\Phi _{x} \left ( \lambda _{2}  \right ) $ then
	\begin{align*}	
		\frac{\lambda _{1} }{2} \sum_{i=1}^{\infty } \frac{a_{i} }{3^{i} } =x=\frac{\lambda _{2} }{2} \sum_{i=1}^{\infty } \frac{b_{i} }{3^{i} } .
	\end{align*}
	So we have
	\begin{align*}	
		\lambda_{1}=\frac{2x}{ \sum_{i=1}^{\infty } \frac{a_{i} }{3^{i} }}=\frac{2x}{ \sum_{i=1}^{\infty } \frac{b_{i} }{3^{i} }}=\lambda_{2}.
	\end{align*}
	Thus $\Phi _{x}$ is injective.
	
	For any $\left( d_{i} \right) \in \Omega $, we have $ $
	\begin{align*}	
		x=\frac{\lambda}{2} \sum_{i=1}^{\infty } \frac{d_{i} }{3^{i} } >0 .
	\end{align*}
	For any $\left( d_{i} \right) \in \Omega $, $\lambda = x/ \sum_{i=1}^{\infty }\frac{d_{i}}{3^{i}}$, i.e., $x=\frac{\lambda}{2} \sum_{i=1}^{\infty } \frac{d_{i} }{3^{i} }$. Then $x\in E_{\lambda }$, i.e., $\lambda \in \Lambda \left( x\right) $.
	So $\Phi _{x}$ is surjective.
\end{proof}

\begin{lemma}\label{montonicity}
	For any $x>0$, $\Phi _{x} $ is strictly decreasing.
\end{lemma}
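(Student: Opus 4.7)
The plan is to translate the monotonicity question on $\lambda$ into a monotonicity question on the associated base-$3$ series via the defining identity of $\Phi_x$. For $\lambda\in\Lambda(x)$, the image $\Phi_x(\lambda)=(d_i)$ is characterized by
\begin{align*}
\sum_{i=1}^{\infty}\frac{d_i}{3^i}=\frac{2x}{\lambda},
\end{align*}
so the map factors as $\lambda\mapsto 2x/\lambda\mapsto (d_i)$. The first map is manifestly strictly decreasing on $(0,\infty)$, so it suffices to show that the second map, which sends a number $y\in C_{1/3}$ to its unique ternary expansion in digits $\{0,2\}$, is strictly decreasing with respect to the lexicographic order on $\Omega$.

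First I would take two distinct parameters $\lambda_1<\lambda_2$ in $\Lambda(x)$ and set $(a_i):=\Phi_x(\lambda_1)$, $(b_i):=\Phi_x(\lambda_2)$. From the displayed identity above I obtain $\sum a_i/3^i>\sum b_i/3^i$. Since $(a_i)\ne (b_i)$ (by the injectivity established in Lemma \ref{bijection}), let $n$ be the smallest index with $a_n\ne b_n$. I would then compare the two series term by term: the first $n-1$ terms cancel, so
\begin{align*}
\sum_{i=1}^{\infty}\frac{a_i-b_i}{3^i}=\frac{a_n-b_n}{3^n}+\sum_{i=n+1}^{\infty}\frac{a_i-b_i}{3^i}.
\end{align*}
Using $a_i,b_i\in\{0,2\}$ and hence $|a_i-b_i|\le 2$, the tail is bounded in absolute value by $\sum_{i>n}2/3^i=1/3^n$, which is strictly smaller than $|a_n-b_n|/3^n=2/3^n$. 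Therefore the sign of the whole series agrees with the sign of $a_n-b_n$; since the series is positive, we conclude $a_n>b_n$, i.e. $a_n=2$ and $b_n=0$, which by definition means $(a_i)\succ(b_i)$.

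This proves $\Phi_x(\lambda_1)\succ\Phi_x(\lambda_2)$ whenever $\lambda_1<\lambda_2$, which is the desired strict monotonicity. There is no serious obstacle: the only subtle point is verifying that the lexicographic order on $\{0,2\}^{\mathbb{N}}$ coincides with the order inherited from $C_{1/3}\subset\mathbb{R}$, which is the tail-estimate above. Everything else is a direct consequence of the identity defining $\Phi_x$ together with the injectivity already shown in Lemma \ref{bijection}.
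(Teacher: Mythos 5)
Your proof is correct and follows essentially the same route as the paper: both reduce the claim to comparing $\sum a_i/3^i$ with $\sum b_i/3^i$ and then use the tail estimate $\sum_{i>n}2/3^i=1/3^n<2/3^n$ at the first index of disagreement to conclude $a_n=2$, $b_n=0$. The only cosmetic difference is that the paper phrases this last step as a proof by contradiction rather than a direct sign comparison.
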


\begin{proof}
	For any $\lambda_{1},\lambda_{2} \in \Lambda \left ( x \right )$ and $\lambda _{1} < \lambda _{2} $, let $\left ( a_{i}  \right ) :=\Phi_{x} \left ( \lambda _{1}  \right )$ and $\left ( b_{i}  \right ):=\Phi_{x} \left ( \lambda _{2}  \right ) $. Then we have
	\begin{align*}	
		\frac{\lambda_{1} }{2} \sum_{n=1}^{\infty } \frac{a_{n} }{3^{n}} = x = \frac{\lambda_{2}}{2} \sum_{n=1}^{\infty } \frac{b_{n} }{3^{n}}.
	\end{align*}
	Since $\lambda_{1}<\lambda_{2}$,
	\begin{align}\label{con-}	
		\sum_{i=1}^{\infty } \frac{a_{i}-b_{i}}{3^{i} } >0.
	\end{align}	
	Note that $a_{i},b_{i}\in \left \{ 0,2 \right \} \left ( i\in\mathbb{N}  \right ) $. Thus, $a_{i}-b_{i}\in \left \{ -2,0,2 \right\}$ and there exists $i_{0}\in \mathbb{N} $ such that $a_{i_{0}}-b_{i_{0}}\ne 0$.
	
	Denoted by $i_{1} = \min \left \{ a_{i}-b_{i} : \left | a_{i} - b_{i}  \right | >0 \right \} $. Next, we will prove that $a_{i_{1}}-b_{i_{1}}>0$. Suppose $a_{i_{1}}-b_{i_{1}}<0$, that is, $a_{i_{1}}-b_{i_{1}}=-2$. Then we have	
	\begin{align*}	
		\sum_{i=1}^{\infty } \frac{a_{i}-b_{i}}{3^{i} } \le \frac{-2}{3^{i_{1} } }+\sum_{j=1}^{\infty }\frac{2}{3^{i_{1}+j} }=\frac{-1}{3^{i_{1}} }<0.
	\end{align*}	
	This is in contradiction with (\ref{con-}). Therefore, $a_{i_{1}}-b_{i_{1}}>0$. By the definition of $i_{1}$, we conclude that $\left ( a_{i}  \right ) \succ  \left ( b_{i}  \right )$.
\end{proof}

As a direct consequence of Lemma \ref{montonicity} we can determine the extreme values of $\Lambda \left( x\right) $.

\begin{lemma}\label{minimal value}
	For any $x>0$, we have $\min \Lambda \left( x\right) = 2x$.
\end{lemma}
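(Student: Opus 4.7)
The plan is to extract $\min\Lambda(x)=2x$ directly from the two preceding lemmas together with the obvious observation that the lexicographically largest sequence in $\Omega$ is $2^{\infty}$. By Lemma~\ref{bijection} the map $\Phi_{x}\colon\Lambda(x)\to\Omega$ is a bijection, and by Lemma~\ref{montonicity} it is strictly decreasing with respect to lexicographic order on $\Omega$. So to find the minimum of $\Lambda(x)$ it suffices to identify $\Phi_{x}^{-1}(\mathbf{s})$, where $\mathbf{s}$ is the $\prec$-largest sequence in $\Omega$.

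First I would observe that for every $(d_{i})\in\Omega$ we have $d_{i}\le 2$, and hence $2^{\infty}=22\cdots\succeq(d_{i})$, with equality only when $(d_{i})=2^{\infty}$. Combining this with the strict monotonicity of $\Phi_{x}$ gives $\Phi_{x}^{-1}(2^{\infty})\le\lambda$ for every $\lambda\in\Lambda(x)$, so the candidate for the minimum is $\lambda^{*}:=\Phi_{x}^{-1}(2^{\infty})$.

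Next I would compute $\lambda^{*}$ explicitly. By the definition of $\Phi_{x}$, the relation $\Phi_{x}(\lambda^{*})=2^{\infty}$ means
\begin{align*}
x\;=\;\frac{\lambda^{*}}{2}\sum_{i=1}^{\infty}\frac{2}{3^{i}}\;=\;\frac{\lambda^{*}}{2}\cdot 1\;=\;\frac{\lambda^{*}}{2},
\end{align*}
hence $\lambda^{*}=2x$. Conversely, the same identity shows directly that $x\in E_{2x}$ (take $d_{i}\equiv 2$), so $2x\in\Lambda(x)$, which upgrades the bound $\min\Lambda(x)\ge 2x$ to an attained minimum.

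There is essentially no obstacle here: the only minor point to be careful about is the distinction between infimum and minimum, which is resolved by exhibiting the coding $2^{\infty}$ explicitly. The proof therefore reduces to the two lines displayed above, once the monotonicity of $\Phi_{x}$ has been established.
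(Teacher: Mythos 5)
Your proposal is correct and follows essentially the same route as the paper: both use Lemma~\ref{bijection} and Lemma~\ref{montonicity} to identify the minimum of $\Lambda(x)$ with $\Phi_{x}^{-1}(2^{\infty})$ and then compute $x=\lambda/2$ from $\sum_{i\ge 1}2/3^{i}=1$. Your explicit verification that $2x\in\Lambda(x)$ (so the infimum is attained) is a small point the paper leaves implicit, but the argument is the same.
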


\begin{proof}
	By Lemma \ref{bijection} and Lemma \ref{montonicity}, it follows that the smallest element $\lambda $ in $\Lambda \left( x\right) $ satisfies $\Phi _{x}\left( \lambda \right) = 2^{\infty } $. This gives that $x=\frac{\lambda }{2}$, then $\min \Lambda \left(x \right) = 2x $.
\end{proof}	

\begin{lemma}\label{max value}
	For any $x>0$, $\sup \Lambda \left ( x \right ) =\infty $.
\end{lemma}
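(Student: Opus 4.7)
The plan is to invoke the surjectivity of $\Phi_x$ from Lemma \ref{bijection}, which asserts that every sequence in $\Omega$ is the $\Phi_x$-image of some $\lambda \in \Lambda(x)$. Combined with the explicit inversion formula that appears inside the proof of Lemma \ref{bijection}, namely
\begin{align*}
\lambda \;=\; \frac{2x}{\sum_{i=1}^{\infty} d_i/3^i} \qquad \text{for } (d_i) = \Phi_x(\lambda),
\end{align*}
this lets me manufacture elements of $\Lambda(x)$ with prescribed size. Intuitively, Lemma \ref{montonicity} says that large $\lambda$ corresponds to small codings in lexicographic order, and the codings $(d_i) \in \Omega$ can be made to have an arbitrarily small (but positive) series sum; this forces $\lambda$ to be arbitrarily large.

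Concretely, I would fix any $n \in \mathbb{N}$ and take the coding $\mathbf{w}^{(n)} := 0^{n-1} 2 \, 0^{\infty} \in \Omega$. By the surjectivity of $\Phi_x$ there exists a unique $\lambda_n \in \Lambda(x)$ with $\Phi_x(\lambda_n) = \mathbf{w}^{(n)}$, and the inversion formula then gives
\begin{align*}
\lambda_n \;=\; \frac{2x}{2/3^n} \;=\; 3^n x.
\end{align*}
Hence $\{3^n x : n \in \mathbb{N}\} \subset \Lambda(x)$, and since $3^n x \to \infty$ as $n \to \infty$, one concludes $\sup \Lambda(x) = \infty$.

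There is no real obstacle in this argument; the statement is essentially an immediate consequence of Lemma \ref{bijection} together with the observation that $C_{1/3}$ (equivalently, the set of values of $\sum d_i/3^i$ over $(d_i) \in \Omega$) accumulates at $0$. It is also worth remarking, as part of the exposition, that the supremum is not attained, since $\Phi_x(\lambda) = 0^{\infty}$ would require the sum to vanish, contradicting $x > 0$; this explains why the statement is written as $\sup \Lambda(x) = \infty$ rather than $\max \Lambda(x) = \infty$.
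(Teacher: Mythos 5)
Your proposal is correct and follows essentially the same route as the paper: both select the codings $0^{n-1}2\,0^{\infty}$ and use the inversion formula $\lambda = 2x\big/\sum_{i=1}^{\infty} d_i/3^i$ to produce arbitrarily large elements of $\Lambda(x)$. The only cosmetic difference is that you compute $\lambda_n = 3^n x$ explicitly, whereas the paper phrases it as choosing $n$ so that $\sum d_i/3^i < 2x/M$ for a given $M$.
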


\begin{proof}
	Since $x>0$, then for any $M>0$ let $\left( d_{i}\right) = 0^{n-1}20^{\infty } \in \Omega\left(n\in \mathbb{N} _{\ge 1} \right) $ such that $\sum_{i=1}^{\infty } \frac{d_{i}}{3^{i}} <\frac{2x}{M}$. And we can set $\lambda :=\frac{2x}{\sum_{i=1}^{\infty} \frac{d_{i}}{3^{i}} } $ then $\lambda \in \Lambda \left( x\right) $ and $\lambda >M$.
\end{proof}

\begin{proposition}\label{no interior points}
	Let $x>0$, the set $\Lambda \left ( x \right ) $ has no interior points.
\end{proposition}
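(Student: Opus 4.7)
The plan is to reduce the statement immediately to the well-known fact that the middle-third Cantor set $\mathrm{C}_{1/3}$ is nowhere dense in $\mathbb{R}$. From the identity $E_\lambda=\tfrac{\lambda}{2}\mathrm{C}_{1/3}$ already recorded in (\ref{E_lambda-Cantor}), membership in $\Lambda(x)$ admits the clean reformulation
\begin{align*}
\lambda\in\Lambda(x)\iff \frac{2x}{\lambda}\in\mathrm{C}_{1/3}.
\end{align*}
Writing $\psi\colon(0,\infty)\to(0,\infty)$, $\psi(\lambda)=\tfrac{2x}{\lambda}$, which is a continuous strictly decreasing bijection of $(0,\infty)$ onto itself, this says exactly that $\Lambda(x)=\psi^{-1}(\mathrm{C}_{1/3})$.

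The proof itself proceeds by contradiction. If $\Lambda(x)$ admitted an interior point it would contain some open interval $(a,b)$ with $0<a<b$. Applying $\psi$, whose image of $(a,b)$ is the non-empty open interval $(\tfrac{2x}{b},\tfrac{2x}{a})$, we would obtain $(\tfrac{2x}{b},\tfrac{2x}{a})\subseteq \mathrm{C}_{1/3}$, which is impossible since $\mathrm{C}_{1/3}$ contains no interval.

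There is no real obstacle here: the content is essentially the one-line observation that the property of being nowhere dense is preserved under pull-back by a homeomorphism, combined with the scaling identity (\ref{E_lambda-Cantor}). A fully symbolic alternative is available using the bijection $\Phi_x$ of Lemma \ref{bijection} together with its strict monotonicity from Lemma \ref{montonicity}: given $\lambda_0\in\Lambda(x)$ with coding $\Phi_x(\lambda_0)=(d_i)$, one can produce arbitrarily close codings $(d'_i)\in\Omega$ with $(d'_i)\succ (d_i)$ whose associated parameter $\lambda':=2x/\sum_{i\ge 1}d'_i/3^i$ lies just below $\lambda_0$, and then exhibit a gap in $\mathrm{C}_{1/3}$ between $2x/\lambda_0$ and $2x/\lambda'$ to find a nearby $\mu\notin\Lambda(x)$. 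However this merely reproves the topological triviality above in symbolic language, so the direct route via $\psi$ is the argument I would present.
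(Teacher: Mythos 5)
Your proof is correct and rests on the same core fact as the paper's own argument: that $\mathrm{C}_{1/3}$ has empty interior, transported to $\Lambda(x)$ through the correspondence $\lambda \mapsto 2x/\lambda$ coming from $E_{\lambda}=\frac{\lambda}{2}\mathrm{C}_{1/3}$. The paper packages this by producing, between any two points $\lambda_1<\lambda_2$ of $\Lambda(x)$, an explicit intermediate value $2x/\sum_{i} d_i/3^i$ with $(d_i)\notin\Omega$, whereas you push a hypothetical interval forward under the homeomorphism $\psi$ and contradict the nowhere-density of $\mathrm{C}_{1/3}$ directly; this is the same argument in a cleaner form.
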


\begin{proof}
	
	It suffices to prove that for any two points $\lambda_{1}, \lambda_{2} \in \Lambda \left ( x \right )$ there must exist $\lambda_{0} $ between $\lambda_{1}$ and $\lambda_{2}$ but not in $\Lambda \left( x\right) $. 
	
	Take $\lambda_{1},\lambda_{2} \in \Lambda \left ( x \right )$ with $\lambda_{1} < \lambda _{2}$. Denote $\left ( a_{i}\right ) :=\Phi_{x}\left ( \lambda _{1}\right ) $ and $\left ( b_{i}\right ) := \Phi_{x}\left ( \lambda _{2}\right )$. Then
	\begin{align*}
		\frac{\lambda_{1} }{2} \sum_{i=1}^{\infty } \frac{a_{i} }{3^{i} } = x = \frac{\lambda_{2} }{2} \sum_{i=1}^{\infty } \frac{b_{i} }{3^{i} }.
	\end{align*}
	By Lemma \ref{montonicity}, we have 
	\begin{align*}
		\left ( a_{i}\right ) =\Phi_{x}\left ( \lambda _{1}\right )\succ \Phi_{x}\left ( \lambda _{2}\right )=\left ( b_{i}\right ) .
	\end{align*}
	Since $\left( a_{i} \right) ,\left( b_{i} \right) \in \Omega $, then $\sum_{i=1}^{\infty } \frac{a_{i} }{3^{i} },\sum_{i=1}^{\infty } \frac{b_{i} }{3^{i} }\in \mathrm{C}_{1/3} $. Clearly, $\mathrm{C}_{1/3}$ contains no interior points, then there exists $\left( d_{i} \right) \in \left \{ 0,1,2 \right \} ^{\mathbb{N} } $ such that
	\begin{align*}
		\sum_{i=1}^{\infty } \frac{b_{i} }{3^{i} } < \sum_{i=1}^{\infty } \frac{d_{i} }{3^{i} } < \sum_{i=1}^{\infty } \frac{a_{i} }{3^{i} }~~\mbox{and}~~\left( d_{i} \right) \notin \Omega.
	\end{align*}
	Set $\lambda_{0} :=\frac{2x}{\sum_{i=1}^{\infty} \frac{d_{i}}{3^{i}} }$, then
	\begin{align}\label{between-lambda_0-and-lambda_2}
		\lambda_{1} < \lambda _{0} < \lambda _{2}.
	\end{align}	
	By Lemma \ref{bijection}, $\Phi _{x}$ is a bijective. Then
	\begin{align}\label{lambda_0-not-in}	
		\lambda _{0} = \frac{2x}{\sum_{i=1}^{\infty} \frac{d_{i}}{3^{i}} } \notin \Lambda\left( x\right) .
	\end{align}	
	Thus, by (\ref{between-lambda_0-and-lambda_2}) and (\ref{lambda_0-not-in}) we have $\lambda _{0} \in \left( \lambda _{1},\lambda_{2} \right)$ but $\lambda_{0}\notin \Lambda \left( x\right)$.	
\end{proof}

\begin{proposition}\label{no isolated points}
	Let $x>0$, the set $\Lambda \left ( x \right ) $ has no isolated points.
\end{proposition}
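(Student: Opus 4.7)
The plan is to show that every $\lambda \in \Lambda(x)$ is a limit of points in $\Lambda(x) \setminus \{\lambda\}$, by transporting the obvious absence of isolated points in the symbolic space $\Omega$ back to the parameter set via $\Phi_x^{-1}$.

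Fix $\lambda \in \Lambda(x)$ and let $(d_i) := \Phi_x(\lambda) \in \Omega$. Since $\lambda < \infty$ and $x = \tfrac{\lambda}{2}\sum_{i=1}^\infty d_i/3^i > 0$, we have $\sum_{i=1}^\infty d_i/3^i > 0$, in particular $(d_i) \neq 0^\infty$. For each $n \in \mathbb{N}$ I would construct a sequence $(c_i^{(n)}) \in \Omega$ that agrees with $(d_i)$ on the first $n$ coordinates but differs from $(d_i)$ elsewhere, for instance by flipping every digit after position $n$ via $c_i^{(n)} = d_i$ for $i \le n$ and $c_i^{(n)} = 2 - d_i$ for $i > n$. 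Since $d_i \in \{0,2\}$, each $c_i^{(n)} \in \{0,2\}$, so $(c_i^{(n)}) \in \Omega$; moreover $c_{n+1}^{(n)} \ne d_{n+1}$, so the two sequences are distinct.

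Setting $\lambda_n := \Phi_x^{-1}\!\left((c_i^{(n)})\right) = 2x\big/\sum_{i=1}^\infty c_i^{(n)}/3^i$, Lemma \ref{bijection} gives $\lambda_n \in \Lambda(x)$ and $\lambda_n \neq \lambda$. To conclude $\lambda_n \to \lambda$ I would estimate
\begin{align*}
\left|\sum_{i=1}^\infty \frac{c_i^{(n)}}{3^i} - \sum_{i=1}^\infty \frac{d_i}{3^i}\right| \le \sum_{i=n+1}^\infty \frac{2}{3^i} = \frac{1}{3^n} \longrightarrow 0,
\end{align*}
so the denominators in the formula for $\lambda_n$ converge to the nonzero value $\sum_{i=1}^\infty d_i/3^i$, whence $\lambda_n \to \lambda$. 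Thus $\lambda$ is not isolated in $\Lambda(x)$, and since $\lambda$ was arbitrary the proposition follows.

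There is no real obstacle: the content is that $\Omega$ has no isolated points in the product topology and that $\Phi_x^{-1}$ is continuous at every point where the denominator is bounded away from zero, which is automatic whenever the target parameter is finite. The only mild subtlety is verifying that the limiting denominator is strictly positive, which uses precisely the hypothesis $x > 0$.
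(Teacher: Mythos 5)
Your proof is correct and follows essentially the same route as the paper: both arguments fix $(d_i)=\Phi_x(\lambda)$, build nearby parameters whose codings agree with $(d_i)$ on the first $n$ digits but differ at position $n+1$ (the paper uses $d_1\cdots d_n(2-d_{n+1})0^{\infty}$, you flip the whole tail), and conclude via the formula $\lambda_n = 2x\big/\sum_i c_i^{(n)}3^{-i}$ that $\lambda_n\to\lambda$ because the denominators converge to the positive value $\sum_i d_i 3^{-i}=2x/\lambda$. Your observation that the denominator must be checked to stay bounded away from zero is exactly the point the paper handles by showing $\limsup_n\lambda_n\le\lambda$.
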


\begin{proof}
	For any $\lambda >0$, set $\left ( d_{i}  \right ) : = \Phi_{x} \left ( \lambda  \right ) $ and $\lambda _{n}$ satiesfies 
	\begin{align*}
		x=\frac{\lambda_{n}}{2}\left ( \sum_{i=1}^{n} \frac{d_{i}}{3^{i}} +\frac{2-d_{n+1}}{3^{n+1}}  \right ).
	\end{align*}
	Since $\left( d_{i}\right) \in \Omega$ then $2-d_{n+1}\in \left \{ 0,2 \right \}$. Thus $\lambda _{n}\in \Lambda \left( x\right) $, then we have 
	\begin{align}  \label{lambda_n-lambda}  
		\left | \lambda_{n} -\lambda \right | 
		&=\left | \frac{2x}{\sum_{i=1}^{n} \frac{d_{i}}{3^{i}} +\frac{2-d_{n+1}}{3^{n+1}}} - \frac{2x}{\sum_{i=1}^{\infty } \frac{d_{i}}{3^{i}} } \right | \nonumber \\
		&=2x\left | \frac{\sum_{i=n+1}^{\infty} \frac{d_{i}}{3^{i}}+\frac{d_{n+1}-2}{3^{n+1}}  }{\left ( \sum_{i=1}^{n} \frac{d_{i}}{3^{i}} +\frac{2-d_{n+1}}{3^{n+1}}\right ) \left ( \sum_{i=1}^{\infty } \frac{d_{i}}{3^{i}}  \right ) }  \right |\nonumber \\
		&\le 2x\left | \frac{\sum_{i=n+1}^{\infty} \frac{2}{3^{i}}}{\left ( \sum_{i=1}^{n} \frac{d_{i}}{3^{i}} +\frac{2-d_{n+1}}{3^{n+1}}\right ) \left ( \sum_{i=1}^{\infty } \frac{d_{i}}{3^{i}}  \right ) }  \right |\nonumber \\
		&=\frac{\lambda \lambda _{n}}{2x\cdot 3^{n}} .
	\end{align}   
	Now we prove $\lambda _{n}$ is bounded. Since
	\begin{align*}    
		\lambda _{n}=\frac{2x}{\sum_{i=1}^{n} \frac{d_{i}}{3^{i}} +\frac{2-d_{n+1}}{3^{n+1}}  } \le \frac{2x}{\sum_{i=1}^{n} \frac{d_{i}}{3^{i}} } .
	\end{align*}    
	and $x>0$, then
	\begin{align*}    
		\limsup_{n \to \infty} \lambda _{n}\le \limsup_{n \to \infty}\frac{2x}{\sum_{i=1}^{n} \frac{d_{i}}{3^{i}} }=\frac{2x}{\sum_{i=1}^{\infty } \frac{d_{i}}{3^{i}} } =\lambda.
	\end{align*}
	So there exists $N>0$ such that for $n>N$ we have $\lambda_{n}\le \lambda +1$. Then (\ref{lambda_n-lambda}) can be written as 
	\begin{align*}
		\left | \lambda_{n} -\lambda \right | \le \frac{\lambda \left ( \lambda +1 \right ) }{2x\cdot 3^{n}} .
	\end{align*}
	Letting $n\to \infty$ we have $\lambda _{n}\to \lambda $.
\end{proof}

Now, we proof the Theorem \ref{Topology-of-lambda(x)}.

\begin{proof}
	By Lemma \ref{minimal value} and Lemma \ref{max value}, Proposition \ref{no interior points} and Proposition \ref{no isolated points} we conclude the Theorem \ref{Topology-of-lambda(x)}.
\end{proof}

At the end of this section we state an example to describe the geometrical construction of $\Lambda \left( x\right) $. By Theorem \ref{Topology-of-lambda(x)} it follows that for any $q>2x$ the set $\Lambda\left( x\right) \cap \left[2x,q \right]$ contains neither interior points nor isolated points and $\Lambda\left( x\right) \cap \left[2x,q \right]$ is a compact set. Then $\Lambda \left( x\right) \cap \left[2x,q \right] $ can be obtained by successively removing a sequence of open intervals from the convex hull of $\Lambda \left( x\right) \cap \left[2x,q \right]$. 

\begin{example}\label{Example-Lambda(x)}
	Let $x=1/2, q=2$ then the convex hull of $\Lambda \left( x\right) \cap \left[2x,q \right] $ is $\left[1,3/2 \right]$. Furthermore, $\Phi_{x}\left(1\right) =2^{\infty}$ and $\Phi_{x}\left( 3/2 \right) =20^{\infty}$. Then by the properties of $\Phi_{x}$, in the first step we remove the open interval $I_{1}=\left( 1.1250, 1.2857 \right) \sim \left(220^{\infty}, 202^{\infty} \right) $ from the convex hull $\left[ 1,3/2 \right]$. And in the next step we remove two open intervals
	\begin{align*}
		I_{2} = \left( 1.0385 ,1.0800 \right) \sim \left(2220^{\infty}, 2202^{\infty} \right) ,\\
		I_{3} = \left( 1.3500, 1.4211\right) \sim \left(2020^{\infty}, 2002^{\infty} \right) .
	\end{align*}
	This procedure can be continued, and after finitely many steps we can get a good approximation of $\Lambda \left( x\right) $( see figure \ref{fig:1} ).
\end{example}

\section{Fractal properties of \(\Lambda \left ( x \right )\) }

In this section we investigate the local dimension of $\Lambda \left ( x \right )$. Given $x>0$ and $\lambda \in \Lambda \left( x\right) $, for any $\delta>0$ we firstly state the dimension of the set $\mathrm{C} _{1/3} \cap \left ( \frac{2x}{\lambda+\delta }  ,\frac{2x}{\lambda -\delta } \right )$.

\begin{lemma}\label{C=C}
	Given $\lambda \in \Lambda \left( x\right) $, and then for any $\delta \in \left( 0,\lambda \right) $
	\begin{align*}
		\dim _{\mathrm{H} } \mathrm{C} _{1/3} \cap \left ( \frac{2x}{\lambda+\delta }  ,\frac{2x}{\lambda -\delta } \right )=\dim _{\mathrm{H} } \mathrm{C} _{1/3}=\frac{\log 2}{\log 3}.
	\end{align*}
\end{lemma}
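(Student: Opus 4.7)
The plan is to reduce the statement to the self-similarity of the middle-third Cantor set at the point $y := 2x/\lambda$. Since $\lambda \in \Lambda(x)$ we have a unique coding $(d_i) = \Phi_x(\lambda) \in \Omega$ with $x = \tfrac{\lambda}{2}\sum_{i=1}^\infty d_i/3^i$, so $y = \sum_{i=1}^\infty d_i/3^i$ lies in $\mathrm{C}_{1/3}$. The map $t \mapsto 2x/t$ is continuous and strictly decreasing on $(0,\infty)$, hence for any $\delta \in (0,\lambda)$ we have $\tfrac{2x}{\lambda+\delta} < y < \tfrac{2x}{\lambda-\delta}$, placing $y$ strictly inside the interval in question.

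Next, I will exploit the self-similar structure of $\mathrm{C}_{1/3}$ at the point $y$. For each $n \in \mathbb{N}$ the cylinder $[d_1 \cdots d_n]$ is the interval $\bigl[\sum_{i=1}^n d_i/3^i,\ \sum_{i=1}^n d_i/3^i + 3^{-n}\bigr]$, has length $3^{-n}$, contains $y$, and satisfies
\begin{align*}
	\mathrm{C}_{1/3} \cap [d_1 \cdots d_n] = \sum_{i=1}^n \frac{d_i}{3^i} + 3^{-n}\,\mathrm{C}_{1/3},
\end{align*}
which is a similar copy of $\mathrm{C}_{1/3}$ with contraction ratio $3^{-n}$, and therefore has Hausdorff dimension $\log 2/\log 3$.

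Since $y$ lies in the open interval $(\tfrac{2x}{\lambda+\delta}, \tfrac{2x}{\lambda-\delta})$, I would pick $n$ so large that the whole cylinder $[d_1 \cdots d_n]$ (of diameter $3^{-n}$) is contained in this open interval; this is possible because the distance from $y$ to the endpoints is strictly positive. Then
\begin{align*}
	\mathrm{C}_{1/3} \cap \left(\tfrac{2x}{\lambda+\delta},\, \tfrac{2x}{\lambda-\delta}\right) \supseteq \mathrm{C}_{1/3} \cap [d_1\cdots d_n],
\end{align*}
so by monotonicity of Hausdorff dimension the left-hand side has dimension at least $\log 2/\log 3$. The reverse inequality is immediate since the set is a subset of $\mathrm{C}_{1/3}$, yielding the claimed equality.

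There is no serious obstacle: the only point requiring care is verifying that for $n$ sufficiently large the cylinder $[d_1\cdots d_n]$ containing $y$ is indeed trapped inside $(\tfrac{2x}{\lambda+\delta}, \tfrac{2x}{\lambda-\delta})$, which follows from $y$ being an interior point and the cylinders shrinking geometrically to $\{y\}$. After that the argument is just the standard observation that any basic cylinder of $\mathrm{C}_{1/3}$ is a rescaled copy of $\mathrm{C}_{1/3}$ itself.
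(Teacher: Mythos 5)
Your proof is correct and follows essentially the same strategy as the paper: locate a level-$n$ basic interval of $\mathrm{C}_{1/3}$ around $y=2x/\lambda$ inside the open interval, observe that its intersection with $\mathrm{C}_{1/3}$ is a similar copy of $\mathrm{C}_{1/3}$ (the paper does this via an explicit linear shift map $\varphi_1$), and conclude by monotonicity of Hausdorff dimension. Your version is in fact slightly cleaner, since noting that the cylinder containing $y$ has diameter $3^{-n}$ and that $y$ is interior to the target interval makes the paper's case split (according to whether $(d_i)$ ends in $2^\infty$) unnecessary.
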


\begin{proof}
	Since $\lambda \in \Lambda \left( x\right) $ then $x\in E_{\lambda}$. By (\ref{E_lambda-Cantor}) there exists $\left( d_{i} \right) \in \Omega$ such that 
	\begin{align*}
		x=\frac{\lambda}{2}\sum_{i=1}^{\infty }  \frac{d_{i}}{3^{i}} .
	\end{align*}
	$\textbf{Case I}$: If $\left( d_{i}\right)$ is end with $2^{\infty}$, then their exists a large $n\in \mathbb{N} $ such that 
	\begin{align}\label{subset-of-A}
		A:=\left [ \sum_{i=1}^{n } \frac{d_{i}}{3^{i} } , \sum_{i=1}^{n} \frac{d_{i}}{3^{i} } +\frac{1}{3^{n} } \right ] \subset \left ( \frac{2x}{\lambda +\delta } , \frac{2x}{\lambda -\delta } \right ) .
	\end{align}
	Then for any $y\in A \cap \mathrm{C} _{1/3}$ there exists $\left( c_{i}\right) \in \Omega$ such that 
	\begin{align*}
		y=\sum_{i=1}^{\infty } \frac{c_{i}}{3^{i} } ~~\mbox{and}~~c_{1}\cdots c_{n}=d_{1}\cdots d_{n}.
	\end{align*}
	Define a mapping $ \varphi_{1}:A\cap \mathrm{C_{1/3}} \to \mathrm{C_{1/3}} $ such that for any $y\in A\cap \mathrm{C_{1/3}} $ we have $\varphi_{1}\left ( y \right ) =\sum_{i=1}^{\infty } \frac{c_{n+i}}{3^{i}} $. Then for any $y_{1},y_{2}\in A\cap \mathrm{C_{1/3}}$, there exists $\left( a_{i}\right) ,\left( b_{i}\right) \in \Omega $ such that 
	\begin{align*}
		y_{1}=\sum_{i=1}^{\infty }\frac{a_{i}}{3^{i}},y_{2}=\sum_{i=1}^{\infty }\frac{b_{i}}{3^{i}}~~\mbox{and}~~a_{1}\cdots a_{n}=b_{1}\cdots b_{n}=d_{1}\cdots d_{n}.
	\end{align*}
	Thus
	\begin{align*}
		\left | \varphi_{1}\left ( y_{1} \right ) -\varphi_{1}\left ( y_{2} \right )\right | &=\left | \sum_{i=1}^{\infty }\frac{a_{i}}{3^{i}}-\sum_{i=1}^{\infty }\frac{b_{i}}{3^{i}} \right | =\sum_{i=1}^{\infty } \frac{\left | a_{n+i}-b_{n+i} \right | }{3^{i}} \\
		&=3^{n}\sum_{i=1}^{\infty } \frac{\left | a_{n+i}-b_{n+i} \right | }{3^{n+i}} =3^{n}\left | y_{1} -y_{2}\right | .
	\end{align*}    
	Then $\varphi_{1}$ is a linear mapping. So we have
	\begin{align*}
		\dim _{\mathrm{H} }\left ( A\cap \mathrm{C_{1/3}} \right ) =\dim _{\mathrm{H} }\mathrm{C_{1/3}}=\frac{\log 2}{\log 3} .
	\end{align*}
	By (\ref{subset-of-A})
	\begin{align*}
		\dim _{\mathrm{H} }\left ( \left ( \frac{2x}{\lambda+\delta } , \frac{2x}{\lambda-\delta }  \right ) \cap \mathrm{C_{1/3}} \right ) \ge \dim _{\mathrm{H} }\left ( A\cap \mathrm{C_{1/3}} \right ).
	\end{align*}
	Since $\left ( \frac{2x}{\lambda+\delta } , \frac{2x}{\lambda-\delta }  \right ) \cap \mathrm{C}_{1/3} \subset \mathrm{C}_{1/3}$, then
	\begin{align*}
		\dim _{\mathrm{H} }\left ( \left ( \frac{2x}{\lambda+\delta } , \frac{2x}{\lambda-\delta }  \right ) \cap \mathrm{C_{1/3}} \right )\le \dim _{\mathrm{H} }\mathrm{C_{1/3}}.
	\end{align*}
	Therefore
	\begin{align*}
		\dim _{\mathrm{H} }\left ( \left ( \frac{2x}{\lambda+\delta } , \frac{2x}{\lambda-\delta }  \right ) \cap \mathrm{C_{1/3}} \right )= \frac{\log 2}{\log 3} .
	\end{align*}
	$\textbf{Case II}$: If $\left( d_{i}\right)$ does not end with $2^{\infty}$, there exists $\left \{ n_{k} \right \} \left ( k\in \mathbb{N} \right ) $ such that for any $k$ we have $ d_{n_{k}}=0 $. Since
	\begin{align*}  
		d_{1}\cdots d_{n_{k}}0^{\infty} \preceq d_{1}d_{2}\cdots,
	\end{align*} 
	then there exists a large $k\in \mathbb{N}$ such that
	\begin{align*}
		B:=\left [ \sum_{i=1}^{n_{k}} \frac{d_{n_{k} } }{3^{i}} , \sum_{i=1}^{n_{k}} \frac{d_{n_{k} } }{3^{i}}+\frac{1}{3^{n_{k} } } \right ] \subset \left ( \frac{2x}{\lambda +\delta } ,\frac{2x}{\lambda -\delta }  \right ) .
	\end{align*}
	For any $y\in B\cap \mathrm{C_{1/3}}$ there exists $\left( c_{i}\right) \in \Omega$ such that 
	\begin{align*}
		y=\sum_{i=1}^{\infty } \frac{c_{i}}{3^{i} } ~~\mbox{and}~~c_{1}\cdots c_{n_{k}}=d_{1}\cdots d_{n_{k}}.
	\end{align*}    
	Define a mapping $\varphi _{2}:B \cap \mathrm{C_{1/3}}\to \mathrm{C_{1/3}} $ such that $\varphi_{2}\left ( y \right ) =\sum_{i=1}^{\infty } \frac{c_{n_{k}+i}}{3^{i}} $.
	Similarly to $\textbf{Case I}$, $\varphi _{2} $ is linear and then 
	\begin{align*}
		\dim _{\mathrm{H} }\left ( \left ( \frac{2x}{\lambda+\delta } , \frac{2x}{\lambda-\delta }  \right ) \cap \mathrm{C_{1/3}} \right )=\dim _{\mathrm{H} } \mathrm{C_{1/3}}=\frac{\log 2}{\log 3} .
	\end{align*} 
	We complete the proof.
\end{proof}

To give the local dimension of $\Lambda \left( x\right) $ we still need the following mapping. For any $\delta \in \left( 0,\lambda \right) $, define a mapping
\begin{align*}
	f:\mathrm{C_{1/3} } \cap \left ( \frac{2x}{\lambda +\delta} ,  \frac{2x}{\lambda -\delta} \right )\to \Lambda\left ( x \right ) \cap \left ( \lambda-\delta, \lambda+\delta \right ).
\end{align*}	

We will state some properties of $f$.

\begin{proposition}\label{f-is-bijection}
	$f$ is a bijective mapping.
\end{proposition}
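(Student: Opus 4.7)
The natural formula for $f$, consistent with (\ref{E_lambda-Cantor}) and with the definition of $\Phi_x$, is $f(y) := 2x/y$: a point $y = \sum_{i=1}^{\infty} d_i/3^i \in \mathrm{C}_{1/3}$ with $(d_i) \in \Omega$ is paired with the unique parameter $\mu$ making $x = (\mu/2)\,y$, namely $\mu = 2x/y$. With this choice the plan is to read bijectivity of $f$ directly off the bijection $\Phi_x$ of Lemma~\ref{bijection} under the reciprocal change of variables $\mu \longleftrightarrow 2x/y$.

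First I would verify well-definedness of $f$ into the claimed codomain. For $y$ in the source set, the identity $x = (2x/y)\cdot y/2 = (\mu/2)\sum d_i/3^i$ shows $\mu := f(y) \in \Lambda(x)$. Because $t \mapsto 2x/t$ is strictly decreasing on $(0,\infty)$, it carries the open interval $\left(2x/(\lambda+\delta),\, 2x/(\lambda-\delta)\right)$ bijectively onto $(\lambda-\delta,\, \lambda+\delta)$, placing $f(y)$ in the correct neighbourhood of $\lambda$. Injectivity is then immediate, since $2x/y_1 = 2x/y_2$ together with $x>0$ forces $y_1 = y_2$.

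For surjectivity, given $\mu \in \Lambda(x)\cap(\lambda-\delta, \lambda+\delta)$ I would apply Lemma~\ref{bijection} to obtain $(d_i) = \Phi_x(\mu) \in \Omega$ with $x = (\mu/2)\sum d_i/3^i$; setting $y := \sum_{i=1}^{\infty} d_i/3^i$ produces a preimage in $\mathrm{C}_{1/3}$, and the same monotonicity of the reciprocal map puts $y$ in the prescribed window. The argument is essentially a change of variables, so no step poses a genuine obstacle; the real value of the proposition is that it packages $f$ as a bi-Lipschitz-on-compact-pieces correspondence between $\mathrm{C}_{1/3}\cap(2x/(\lambda+\delta),\,2x/(\lambda-\delta))$ and $\Lambda(x)\cap(\lambda-\delta,\,\lambda+\delta)$, which will subsequently transport the dimension computation of Lemma~\ref{C=C} to yield the local dimension statement in Theorem~\ref{local-dimension}.
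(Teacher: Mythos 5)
Your proposal is correct and follows essentially the same route as the paper: both identify $f(y)=2x/y$, obtain injectivity immediately from the reciprocal formula with $x>0$, and obtain surjectivity by taking the coding $\Phi_x(\lambda^*)$ of a given parameter and setting $y=\sum_{i=1}^{\infty}d_i/3^i$. Your added check that the strictly decreasing map $t\mapsto 2x/t$ carries $\left(2x/(\lambda+\delta),\,2x/(\lambda-\delta)\right)$ onto $(\lambda-\delta,\lambda+\delta)$ is a small well-definedness point the paper leaves implicit, but it does not change the argument.
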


\begin{proof}
	Take $x_{1},x_{2}\in \mathrm{C_{1/3} } \cap \left ( \frac{2x}{\lambda +\delta} ,  \frac{2x}{\lambda -\delta} \right ) $, then there exists $\left( a_{i}\right) ,\left( b_{i} \right) \in \Omega $ with
	\begin{align*}
		x_{1}=\sum_{i=1}^{\infty } \frac{a_{i}}{3^{i}} ,~~ x_{2}=\sum_{i=1}^{\infty } \frac{b_{i}}{3^{i}} .
	\end{align*}
	By the definition of $\Lambda \left( x \right) $ we obtain that
	\begin{align*}
		\Lambda \left ( x \right ) \cap \left ( \lambda -\delta , \lambda +\delta \right ) = \left \{ \lambda ^{*} \in \left ( \lambda -\delta , \lambda +\delta \right ):\lambda ^{*}={2x}/~{\sum_{i=1}^{\infty } \frac{d_{i}}{3^{i} } },~\left ( d_{i}\right ) \in \Omega \right \} .
	\end{align*}
	Therefore, 
	\begin{align*}
		f\left( x_{1} \right) =2x/~{\sum_{i=1}^{\infty } \frac{a_{i}}{3^{i} } },~~f\left( x_{2} \right) = 2x/~{\sum_{i=1}^{\infty } \frac{b_{i}}{3^{i} } }.
	\end{align*}    
	If $f\left( x_{1} \right) =f\left( x_{2} \right) $, then 
	\begin{align*}
		{2x}/~{\sum_{i=1}^{\infty } \frac{a_{i}}{3^{i} } } = {2x}/~{\sum_{i=1}^{\infty } \frac{b_{i}}{3^{i} } }.
	\end{align*}
	Hence, 
	\begin{align*}
		x_{1} =\sum_{i=1}^{\infty } \frac{a_{i}}{3^{i}} = \sum_{i=1}^{\infty } \frac{b_{i}}{3^{i}} = x_{2}.
	\end{align*}
	So $f$ is injective.
	
	For any $\lambda ^{*}\in \Lambda\left ( x \right ) \cap \left ( \lambda -\delta, \lambda + \delta \right )$. Since $\lambda ^{*}\in \Lambda\left ( x \right )$ there exists $\left( d_{i} \right) \in \Omega$ satisfying
	\begin{align*}
		\lambda ^{*}=2x/~\sum_{i=1}^{\infty } \frac{d_{i}}{3^{i}}.
	\end{align*}
	We can set $y=\sum_{i=1}^{\infty } \frac{d_{i}}{3^{i}}$ where $\left( d_{i}\right) \in \Omega $ and then 
	\begin{align*}
		f\left ( y \right ) =f\left ( \sum_{i=1}^{\infty } \frac{d_{i}}{3^{i}} \right ) =f\left ( \frac{2x}{\lambda ^{*} }  \right ) =\lambda ^{*},
	\end{align*} 
	which implies that $f$ is surjective.
\end{proof}

\begin{proposition}\label{the-Lip-of-f}
	For any $\delta \in \left( 0,\lambda \right) $, there exists constants $ 0 < c_{1} \le c_{2} < \infty $ such that for any $x_{1},x_{2} \in \mathrm{C_{1/3} } \cap \left ( \frac{2x}{\lambda +\delta} ,  \frac{2x}{\lambda -\delta} \right ) $ 
	\begin{align*}	
		c_{1}\left | x_{1} -x_{2} \right | \le \left | f\left ( x_{1} \right ) -f\left ( x_{2} \right )  \right | \le c_{2} \left | x_{1} -x_{2}\right |.
	\end{align*}	
\end{proposition}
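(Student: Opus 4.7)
The map is explicitly $f(y)=2x/y$, so the whole proposition reduces to checking that the function $y\mapsto 2x/y$ is bi-Lipschitz on the interval $\bigl(\tfrac{2x}{\lambda+\delta},\tfrac{2x}{\lambda-\delta}\bigr)$. My plan is to compute $|f(x_1)-f(x_2)|$ directly and bound the denominator using the fact that both $x_1$ and $x_2$ lie in this interval.

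First I would write
\[
|f(x_1)-f(x_2)| \;=\; \left|\frac{2x}{x_1}-\frac{2x}{x_2}\right| \;=\; \frac{2x}{x_1 x_2}\,|x_1-x_2|.
\]
Then, since $x_1,x_2\in\bigl(\tfrac{2x}{\lambda+\delta},\tfrac{2x}{\lambda-\delta}\bigr)$, the product $x_1x_2$ is squeezed between $\bigl(\tfrac{2x}{\lambda+\delta}\bigr)^2$ and $\bigl(\tfrac{2x}{\lambda-\delta}\bigr)^2$. This gives the explicit bounds
\[
\frac{(\lambda-\delta)^2}{2x}\,|x_1-x_2| \;\le\; |f(x_1)-f(x_2)| \;\le\; \frac{(\lambda+\delta)^2}{2x}\,|x_1-x_2|,
\]
so I can take $c_1=(\lambda-\delta)^2/(2x)$ and $c_2=(\lambda+\delta)^2/(2x)$. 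Both constants are strictly positive and finite because $\delta\in(0,\lambda)$ and $x>0$, which is exactly the claim.

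There is no real obstacle here — the statement is essentially the observation that the reciprocal function has a bounded, non-vanishing derivative on any closed subinterval of $(0,\infty)$. The only thing to be careful about is recording the dependence of $c_1,c_2$ on $x$, $\lambda$, and $\delta$ (rather than on $x_1,x_2$), which the bounds above make explicit. Combining this with Proposition~\ref{f-is-bijection} will then allow $f$ to be used as a bi-Lipschitz bijection between $\mathrm{C}_{1/3}\cap\bigl(\tfrac{2x}{\lambda+\delta},\tfrac{2x}{\lambda-\delta}\bigr)$ and $\Lambda(x)\cap(\lambda-\delta,\lambda+\delta)$, so that Lemma~\ref{C=C} transfers to yield Theorem~\ref{local-dimension}.
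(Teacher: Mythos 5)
Your proposal is correct and matches the paper's own proof essentially verbatim: the paper also writes $|f(x_1)-f(x_2)|=\frac{2x}{x_1x_2}|x_1-x_2|$, bounds $x_1,x_2$ by the interval endpoints, and takes $c_1=(\lambda-\delta)^2/(2x)$ and $c_2=(\lambda+\delta)^2/(2x)$. No differences worth noting.
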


\begin{proof}
	Take $x_{1},x_{2}  \in \mathrm{C_{1/3} } \cap \left ( \frac{2x}{\lambda +\delta} ,  \frac{2x}{\lambda -\delta} \right )  $. Then there exists sequences $\left( a_{i}\right) ,\left( b_{i}\right) \in \Omega$ satisfying 
	\begin{align*}	
		x_{1}=\sum_{i=1}^{\infty } \frac{a_{i}}{3^{i}} ,~x_{2}=\sum_{i=1}^{\infty } \frac{b_{i}}{3^{i}} .
	\end{align*}	
	So
	\begin{align*}
		f\left ( x_{1} \right )=\frac{2x}{x_{1}},~f\left ( x_{2} \right )=\frac{2x}{x_{2}}.
	\end{align*}		
	Then
	\begin{align}\label{f(x1)-f(x2)}
		\left | f\left ( x_{1} \right ) -f\left ( x_{2} \right )  \right | = \left | \frac{2x}{x_{1}} -\frac{2x}{x_{2}} \right | =\frac{2x}{x_{1}x_{2}} \left | x_{1} -x_{2}\right | .
	\end{align}
	Notice that 
	\begin{align}\label{x_1,x_2}
		0<\frac{2x}{\lambda +\delta } < x_{1} \le x_{2} < \frac{2x}{\lambda -\delta } <\infty .
	\end{align}	
	So the expression (\ref{f(x1)-f(x2)}) can be written as 
	\begin{align}\label{inequation}	
		\frac{\left ( \lambda -\delta \right )^{2}  }{2x} \left | x_{1}-x_{2} \right | \le 	\left | f\left ( x_{1} \right ) -f\left ( x_{2} \right )  \right |\le \frac{\left ( \lambda +\delta \right )^{2}  }{2x} \left | x_{1}-x_{2} \right | .
	\end{align}		
	Let
	\begin{align*}
		c_{1} := \frac{\left ( \lambda -\delta \right )^{2}  }{2x},~ c_{2} := \frac{\left ( \lambda +\delta \right )^{2}  }{2x}.
	\end{align*}	
	Thus, $ 0 < c_{1} \le c_{2} < \infty$ as desired.
\end{proof}

Now, we proof Theorem \ref{local-dimension}.

\begin{proof}
	By Property \ref{f-is-bijection} and Property \ref{the-Lip-of-f}, $f$ is a bi-Lipschitz mapping from $\mathrm{C_{1/3} } \cap \left ( \frac{2x}{\lambda +\delta} ,  \frac{2x}{\lambda -\delta} \right )$ to $ \Lambda\left ( x \right ) \cap \left ( \lambda-\delta, \lambda+\delta \right )$. Then by Lemma \ref{C=C} we have
	\begin{align*}
		\dim _{\mathrm{H} } \Lambda \left ( x \right ) \cap \left ( \lambda-\delta,\lambda+\delta \right )
		& = \dim _{\mathrm{H} } f\left ( \mathrm{C} _{1/3} \cap \left ( \frac{2x}{\lambda+\delta }  ,\frac{2x}{\lambda -\delta } \right )  \right ) \\
		& = \dim _{\mathrm{H} } \mathrm{C} _{1/3} \cap \left ( \frac{2x}{\lambda+\delta }  ,\frac{2x}{\lambda -\delta } \right ) = \frac{\log 2}{\log 3}
	\end{align*}
	as desired.
\end{proof}

At the end of this section, we give a corollary of Theorem \ref{local-dimension}.

\begin{corollary}\label{dimension-of-Lambda(x)}
	Let $ x>0 $, for any $\lambda \in \Lambda\left( x\right) $ we have 
	\begin{align*}
		\dim_{\mathrm{H}}\left ( \Lambda \left ( x \right ) \right ) =\dim_{\mathrm{H}} E_{\lambda }= \frac{\log 2}{\log 3} .
	\end{align*}  
\end{corollary}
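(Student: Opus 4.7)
The plan is to derive the corollary from Theorem \ref{local-dimension} via matching lower and upper bounds; recall that $\dim_{\mathrm{H}} E_{\lambda} = \log 2/\log 3$ is already recorded in the introduction through the identification $E_{\lambda} = (\lambda/2)\mathrm{C}_{1/3}$ and the standard dimension formula for the middle-third Cantor set.

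The lower bound is immediate. For any $\lambda \in \Lambda(x)$ and any $\delta \in (0,\lambda)$ we have $\Lambda(x) \cap (\lambda - \delta, \lambda + \delta) \subset \Lambda(x)$, so monotonicity of Hausdorff dimension combined with Theorem \ref{local-dimension} gives
\begin{align*}
\dim_{\mathrm{H}} \Lambda(x) \;\ge\; \dim_{\mathrm{H}} \bigl(\Lambda(x) \cap (\lambda - \delta, \lambda + \delta)\bigr) \;=\; \frac{\log 2}{\log 3}.
\end{align*}

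For the upper bound I plan to combine the local dimension result with a compactness argument and countable stability. First I would verify that $\Lambda(x)$ is a closed subset of $(0,\infty)$: if $\lambda_{k} \in \Lambda(x)$ and $\lambda_{k} \to \lambda^{*} > 0$, then (\ref{E_lambda-Cantor}) yields $2x/\lambda_{k} \in \mathrm{C}_{1/3}$, and closedness of $\mathrm{C}_{1/3}$ forces $2x/\lambda^{*} \in \mathrm{C}_{1/3}$, so $\lambda^{*} \in \Lambda(x)$. Since $\min \Lambda(x) = 2x$ and $\sup \Lambda(x) = \infty$ by Theorem \ref{Topology-of-lambda(x)}, the set $K_{n} := \Lambda(x) \cap [2x, n]$ is compact for every integer $n > 2x$. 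To each $\lambda \in K_{n}$ Theorem \ref{local-dimension} assigns an open neighborhood $(\lambda - \delta_{\lambda}, \lambda + \delta_{\lambda})$ whose intersection with $\Lambda(x)$ has Hausdorff dimension $\log 2/\log 3$; extracting a finite subcover of $K_{n}$ and applying finite stability gives $\dim_{\mathrm{H}} K_{n} \le \log 2/\log 3$. Writing $\Lambda(x) = \bigcup_{n > 2x} K_{n}$ as a countable union and invoking countable stability of Hausdorff dimension yields $\dim_{\mathrm{H}} \Lambda(x) \le \log 2/\log 3$, completing the proof.

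The only mildly non-routine step is the closedness of $\Lambda(x)$, but this is an immediate consequence of (\ref{E_lambda-Cantor}); the remainder is a standard promotion of a local dimension statement to a global one via compact exhaustion.
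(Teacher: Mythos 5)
Your proposal is correct, and at its core it takes the same route as the paper: both arguments get the lower bound for free from monotonicity together with Theorem \ref{local-dimension}, and both obtain the upper bound by writing $\Lambda(x)$ as a countable union of pieces controlled by that theorem and invoking countable stability of Hausdorff dimension. The difference is in how the covering is produced. The paper picks a sequence $\lambda_n \in \Lambda(x)$ with $\lambda_n \to \infty$ and asserts $\Lambda(x) = \bigcup_n \bigl(\Lambda(x) \cap (\lambda_n - \delta, \lambda_n + \delta)\bigr)$; as literally written this covering claim is not justified (fixed-width neighborhoods of a sparse sequence tending to infinity need not cover $\Lambda(x)$, and the quantification of $\delta$ over $n$ is muddled), though the intended conclusion is of course right. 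Your version replaces that step with a genuinely rigorous mechanism: you first observe from (\ref{E_lambda-Cantor}) that $\lambda \in \Lambda(x)$ iff $2x/\lambda \in \mathrm{C}_{1/3}$, so $\Lambda(x)$ is closed in $(0,\infty)$ and $K_n = \Lambda(x) \cap [2x,n]$ is compact; you then extract a finite subcover of the local neighborhoods, use finite stability on each $K_n$, and finish with the countable exhaustion $\Lambda(x) = \bigcup_n K_n$. So what your approach buys is precisely a patch for the gap in the paper's own covering step, at the modest cost of the extra (but easy) closedness verification; the paper's version is shorter but relies on an unproved decomposition.
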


\begin{proof}
	By Theorem \ref{Topology-of-lambda(x)}, $\min \left \{ \lambda>0:\lambda \in \Lambda \left ( x \right )  \right \} =2x>0$ and there exists $ \left \{ \lambda_{n} \right \} \in \Lambda\left ( x \right ) $ such that $ \lim_{n \to \infty} \lambda _{n}=+\infty$. So for any $\delta \in \left(0,\lambda_{n} \right) $ we have 
	\begin{align*}
		\Lambda \left ( x \right ) =\bigcup_{n\in \mathbb{N} }\left ( \Lambda \left ( x \right ) \cap \left( \lambda_{n}-\delta , \lambda_{n}+\delta \right) \right ) .
	\end{align*}
	By Theorem \ref{local-dimension}, we obtain
	\begin{align*}
		\dim _{\mathrm{H} }\left ( \Lambda\left ( x \right )  \right ) =\sup_{n\in \mathbb{N} } \dim _{\mathrm{H}}\left ( \Lambda\left ( x \right ) \cap \left( \lambda_{n}-\delta , \lambda_{n}+\delta \right) \right ) = \frac{\log2}{\log3}
	\end{align*}
	as desired.	
\end{proof}

\section{The Hausdorff dimension of \(\Lambda_{\mathrm{not}} \left ( x \right )\) }

In this section, we will consider the Hausdorff dimension of the set 
\begin{align}\label{definition-Lambda_NOT}
	\Lambda _{\mathrm{not}} \left ( x \right ) =\left \{ \lambda \in \Lambda \left ( x \right ):\underline{freq}_{i}\Phi _{x}\left ( \lambda  \right ) \ne \overline{freq} _{i}\Phi _{x}\left ( \lambda  \right ) ,i\in \left \{ 0,2 \right \}  \right \},
\end{align}
where $freq_{i}\Phi_{x}\left( \lambda \right) \left(i=0,2 \right) $ denote the frequency of the digits 0 or 2 in $\Phi _{x}\left( \lambda \right) $. The main idea is to construct subsets of $\Lambda _{\mathrm{not}}\left ( x \right )$ with the Hausdorff dimension arbitrarily close to $\log 2/ \log 3$. 

Recall that for any sequence $\left( d_{n}\right) \in \Omega:=\left \{ 0,2 \right \} ^{\mathbb{N} } $, we denote
\begin{align*}
	\underline{freq}_{0} \left ( \left ( d_{n} \right )  \right )=\liminf_{n \to \infty} \frac{\# \left \{ 1\le k \le n :i_{k}=0 \right \} }{n} 
\end{align*}
and
\begin{align*}
	\overline{freq}_{0}\left ( \left ( d_{n} \right )  \right )=\limsup_{n \to \infty} \frac{\# \left \{ 1\le k \le n :d_{k}=0 \right \} }{n} .
\end{align*}
If $\underline{freq}_{0} \left ( \left ( d_{n} \right )  \right ) = \overline{freq}_{0}\left ( \left ( d_{n} \right ) \right )$, we say the frequency of digit 0 in the sequence $\left( d_{n}\right) $ exists, denoted by ${freq}_{0}\left ( \left ( d_{n} \right )  \right ) $. Similarly, we can define the frequency of digit 2 in $\left( d_{n}\right) $.

Fix $x>0$. For any $\lambda_{*} \in \Lambda \left ( x \right ) $ let $\left ( x_{i} \right ) :=\Phi _{x} \left ( \lambda_{*}  \right ) \in \Omega $, then $x=\frac{\lambda _{*}}{2} \sum_{i=1}^{\infty }\frac{x_{i}}{3^{i}}>0 $. Since $\lambda_{*}\ge 2x >0$ snd $\sum_{i=1}^{\infty }\frac{x_{i}}{3^{i}}>0 $, so there exists $n\in \mathbb{N} $ such that $x_{1} \cdot \cdot \cdot x_{n} \ne 0^{n} $. Let
\begin{align}\label{gamma_n-and-beta_n}
	\gamma _{n}:= \frac{2x}{\sum_{i=1}^{n} \frac{x_{i}}{3^{i}} } > \frac{2x}{\sum_{i=1}^{n} \frac{x_{i}}{3^{i}} +\sum_{i=n+1}^{\infty }\frac{2}{3^{i}}} =: \beta _{n}.
\end{align}
Then
\begin{align*}
	\Phi _{x}\left ( \gamma  _{n} \right )=x_{1}\cdot \cdot \cdot x_{n}0^{\infty }\prec \left( x_{i} \right) \prec \Phi _{x}\left ( \beta _{n} \right )=x_{1}\cdot \cdot \cdot x_{n}2^{\infty }.
\end{align*}
By Lemma \ref{montonicity}, we have 
\begin{align*}
	0< \beta _{n} \le \lambda_{*} \le \gamma _{n} < \infty .
\end{align*}
Since ${\sum_{i=1}^{\infty } \frac{x_{i}}{3^{i}} } =\frac{2x}{\lambda_{*}}>0$, then
\begin{align}\label{limit-is-equal}
	\lim_{n \to \infty} \gamma _{n}=\lim_{n \to \infty}  \frac{2x}{\sum_{i=1}^{n} \frac{x_{i}}{3^{i}} } = \frac{2x}{\sum_{i=1}^{\infty } \frac{x_{i}}{3^{i}} } =\lambda_{*}.
\end{align}
Similarly, $\lim_{n \to \infty} \beta _{n} =\lambda_{*} $.

Now, for $q\in \mathbb{N} $ define the set $ {\textstyle \sum_{q,n}} \left ( x \right ) $ consists of all $\lambda \in \Lambda \left( x\right)\cap \left( \beta _{n},\gamma_{n} \right) $ such that 
\begin{align}\label{Phi _{x}}
	\Phi _{x} \left ( \lambda  \right ) &= x_{1} \cdots x_{n} d_{1} d_{2} \cdots \nonumber \\
	& = x_{1} \cdot \cdot \cdot x_{n} d_{r_{0}+1 } d_{r_{0}+2} \cdot \cdot \cdot d_{r_{0}+3q} 002 \nonumber \\
	& ~~~~~~~~~~~~~~~~~~~~~~ d_{r_{1}+1 } d_{r_{1}+2} \cdots d_{r_{1}+2 \cdot 3q} 000022 \nonumber \\
	& ~~~~~~~~~~~~~~~~~~~~~~ \cdots\\
	& ~~~~~~~~~~~~~~~~~~~~~~ d_{r_{m-1}+1 } d_{r_{m-1}+2} \cdots d_{r_{m-1}+2^{m-1} \cdot 3q} 0^{2^{m} } 2^{2^{m-1} }\nonumber \\
	& ~~~~~~~~~~~~~~~~~~~~~~ d_{r_{m}+1 } d_{r_{m}+2} \cdots d_{r_{m}+2^{m} \cdot 3q} 0^{2^{m+1} } 2^{2^{m}} \cdots, \nonumber
\end{align}
where 
\begin{align}\label{Probability-r_m}
	r_{m} := 3\left ( q+1 \right )  \left ( 1+2+\cdots + 2^{m-1} \right ) = 3\left ( q+1 \right )\left ( 2^{m} -1 \right ) ~\mbox{and}~ d_{r_{m} + kq }=2
\end{align}
for all $k\in \left \{ 1,3,\cdots ,2^{m} \cdot 3 \right \} $ and $m\ge 0$. Here we point out that $r_{0}=0$. 

In the following we will show that $ {\textstyle \sum_{q,n}} \left ( x \right ) $ is a subset of $\Lambda_{\mathrm{not} }\left( x\right) $.

\begin{lemma}\label{the subset of Lambda_not_(x)}
	For any $ q,n\in \mathbb{N} $, we have
	\begin{align*}
		{\textstyle \sum_{q,n}} \left ( x \right ) \subset \Lambda _{\mathrm{not}} \left ( x \right ).
	\end{align*}
\end{lemma}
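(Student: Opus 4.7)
The plan is to show that every $\lambda \in {\textstyle\sum}_{q,n}(x)$ has a coding $\Phi_x(\lambda)$ in which the frequencies of the digits $0$ and $2$ both fail to exist. Since these two digit frequencies sum to $1$ on every finite prefix, it suffices to exhibit two subsequences of prefix lengths along which the $0$-frequency in the prefix differs by an amount bounded away from zero; the same obstruction then transfers automatically to digit $2$.

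Fix $\lambda \in {\textstyle\sum}_{q,n}(x)$ and write $(y_k) := \Phi_x(\lambda)$ in the structured form (\ref{Phi _{x}}). For each $m \ge 0$ I would introduce the two anchor positions
\begin{align*}
P_m \;:=\; n + r_m + 2^m \cdot 3q, \qquad Q_m \;:=\; P_m + 2^{m+1},
\end{align*}
which mark the positions immediately before and immediately after the forced block $0^{2^{m+1}}$ appearing in the $m$-th line of (\ref{Phi _{x}}). Plugging in $r_m = 3(q+1)(2^m - 1)$ gives the asymptotics $P_m = 2^m(6q+3) + O(1)$ and $Q_m = 2^m(6q+5) + O(1)$. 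Let $Z(N)$ count the $0$s among $y_1, \dots, y_N$, and set $f_m := Z(P_m)/P_m$ and $g_m := Z(Q_m)/Q_m$.

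Since $y_{P_m+1}, \dots, y_{Q_m}$ are all $0$ by construction, I have $Z(Q_m) = Z(P_m) + 2^{m+1}$, and an elementary calculation gives
\begin{align*}
g_m - f_m \;=\; \frac{2^{m+1}\bigl(P_m - Z(P_m)\bigr)}{P_m Q_m}.
\end{align*}
The quantity $P_m - Z(P_m)$ counts the digit-$2$ positions among $y_1,\dots,y_{P_m}$. Each completed line $j \in \{0,1,\dots,m-1\}$ contains the forced suffix block $2^{2^j}$, so $P_m - Z(P_m) \ge \sum_{j=0}^{m-1} 2^j = 2^m - 1$. Combining this with the asymptotics above yields
\begin{align*}
g_m - f_m \;\ge\; \frac{2^{m+1}(2^m - 1)}{P_m Q_m} \;\longrightarrow\; \frac{2}{(6q+3)(6q+5)} \;>\; 0
\end{align*}
as $m \to \infty$. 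In particular, there is a constant $c>0$ with $g_m - f_m \ge c$ for all sufficiently large $m$. Passing to a subsequence along which $f_m \to \liminf_m f_m$ gives $\limsup_m g_m \ge \liminf_m f_m + c$, and since $\overline{freq}_0\Phi_x(\lambda) \ge \limsup_m g_m$ and $\underline{freq}_0\Phi_x(\lambda) \le \liminf_m f_m$, this forces $\overline{freq}_0\Phi_x(\lambda) - \underline{freq}_0\Phi_x(\lambda) \ge c > 0$, and the analogous inequality for digit $2$ is immediate.

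The only real obstacle is careful bookkeeping of position indices: one must verify that the lower bound $P_m - Z(P_m) \ge 2^m - 1$ uses solely the forced $2$s from the $2^{2^j}$ suffix blocks of lines $j<m$, while the prefix $x_1\cdots x_n$, the extra forced $2$s imposed by $d_{r_j+kq}=2$ inside each free region, and the free choices themselves can only increase the $2$-count and hence only strengthen the asymptotic estimate.
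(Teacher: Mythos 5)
Your proof is correct, and while it exploits the same underlying mechanism as the paper's argument --- the forced constant-digit blocks in (\ref{Phi _{x}}) have length a fixed positive proportion of the current position, so the running digit frequency must oscillate by a definite amount --- your execution is genuinely different and cleaner. The paper anchors at $\ell_m = r_m - 2^{m-1}$ and $r_m$ (straddling the forced $2^{2^{m-1}}$ block), first treats the case where the free-part density $M_2((d_i),m)/2^m$ fails to converge, then sets $\alpha$ equal to its limit and derives a contradiction from the forced identity $\frac{1+\alpha}{3(q+1)} = \frac{1+2\alpha}{6q+5}$, using $\alpha \le 3q$ to show the two sides differ by at least $\frac{2}{3(q+1)(6q+5)}$. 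You instead straddle the forced $0^{2^{m+1}}$ block and bound the oscillation of the prefix $0$-frequency directly and unconditionally: the identity $g_m - f_m = 2^{m+1}\bigl(P_m - Z(P_m)\bigr)/(P_m Q_m)$, combined with the crude count $P_m - Z(P_m) \ge 2^m - 1$ coming from the earlier $2^{2^j}$ suffix blocks alone, already yields a gap tending to $\frac{2}{(6q+3)(6q+5)} > 0$. This buys you three things: no case split, no need to assume that any auxiliary limit exists, and an explicit quantitative lower bound $\overline{freq}_0\Phi_x(\lambda) - \underline{freq}_0\Phi_x(\lambda) \ge \frac{2}{(6q+3)(6q+5)}$ that the paper's contradiction argument never states. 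Your transfer of the obstruction to digit $2$ via $\overline{freq}_2 = 1 - \underline{freq}_0$ and $\underline{freq}_2 = 1 - \overline{freq}_0$ is correct and is needed, since the definition of $\Lambda_{\mathrm{not}}(x)$ requires both digit frequencies to fail to exist; the remaining bookkeeping (the prefix $x_1\cdots x_n$ and the extra forced $2$s at positions $r_m + kq$ contribute only $O(1)$ to positions or only increase the $2$-count) is handled correctly.
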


\begin{proof}
	For any $\lambda \in {\textstyle \sum_{q,n}} \left ( x \right ) $, set $\Phi _{x} \left ( \lambda  \right ) = x_{1} \cdots x_{n } d_{1} d_{2} \cdots $. It suffices to prove $\lambda \in\Lambda _{\mathrm{not}} \left ( x \right ) $.
	
	For any $m,n \in \mathbb{N}$, let $M_{2}  \left( \left( d_{i}\right),m \right)  $ and $M_{0} \left( \left( d_{i}\right),m \right) $ be the number of 2s and 0s in
	\begin{align*}
		d_{r_{0} + 1} \cdots d_{r_{0} + 3q } d_{r_{1} + 1} \cdots d_{r_{1} + 2\cdot 3q } \cdots d_{r_{m-1} + 1} \cdots d_{r_{m-1} + 2^{m-1}  \cdot 3q }
	\end{align*}
	and let $N_{2} \left ( \left ( d_{i} \right ) ,n \right ) $ and $N_{0} \left ( \left ( d_{i} \right ) ,n \right ) $ be the number of 2s and 0s in $d_{1} d_{2} \cdots d_{n}  $. By (\ref{Probability-r_m}), we have
	\begin{align}\label{N_2}
		N_{2} \left ( \left ( d_{i} \right ) ,r_{m}  \right ) &= M_{2}  \left( \left( d_{i}\right),m \right)  +\left ( 1+2+\cdots + 2^{m-1} \right ) \nonumber \\
		&= M_{2}  \left( \left( d_{i}\right),m \right)  +\left( 2^{m} -1\right) 
	\end{align}
	and
	\begin{align}\label{N_0}
		N_{0} \left ( \left ( d_{i} \right ) ,r_{m}  \right ) &= M_{0} \left( \left( d_{i}\right),m \right)+\left ( 2 + 2^{2} +\cdots + 2^{m} \right ) \nonumber \\
		&= M_{0} \left( \left( d_{i}\right) ,m \right)+ 2\left ( 2^{m} -1 \right ).
	\end{align}
	Suppose
	\begin{align*}	
		\limsup_{m \to \infty} \frac{M_{2}  \left( \left( d_{i}\right),m \right)  }{2^{m} } >\liminf_{m \to \infty} \frac{M_{2}  \left( \left( d_{i}\right),m \right)  }{2^{m} } 
	\end{align*}	
	and
	\begin{align*}	
		\limsup_{m \to \infty} \frac{M_{0}  \left( \left( d_{i}\right),m \right)  }{2^{m} } >\liminf_{m \to \infty} \frac{M_{0}  \left( \left( d_{i}\right),m \right)  }{2^{m} }. 
	\end{align*}	
	Then by (\ref{N_2}) and (\ref{N_0}), we obtain
	\begin{align*}		
		\limsup_{m \to \infty} \frac{N_{2}  \left( \left( d_{i}\right),r_{m} \right)  }{2^{m} } >\liminf_{m \to \infty} \frac{N_{2}  \left( \left( d_{i}\right),r_{m} \right)  }{2^{m} } 
	\end{align*}		
	and
	\begin{align*}	
		\limsup_{m \to \infty} \frac{N_{0}  \left( \left( d_{i}\right),r_m \right)  }{2^{m} } >\liminf_{m \to \infty} \frac{N_{0}  \left( \left( d_{i}\right),r_m \right)  }{2^{m} }. 
	\end{align*}
	Then $\lambda \in \Lambda_\mathrm{not} \left( x\right) $.
	
	Without loss of generality, assume that $\limsup_{m \to \infty} \frac{M_{2}  \left( \left( d_{i}\right),m \right)  }{2^{m} } = \liminf_{m \to \infty} \frac{M_{2}  \left( \left( d_{i}\right),m \right)  }{2^{m} }=\alpha $. By (\ref{Phi _{x}}) we have
	\begin{align}\label{the limit of N_2/r_m}
		\lim_{m \to \infty} \frac{ N_{2} \left ( \left ( d_{i} \right ) ,r_{m}  \right )}{r_{m} } = \frac{1}{3\left ( q+1 \right ) } \left ( 1+\lim_{m \to \infty} \frac{ M_{2}  \left( \left( d_{i}\right),m \right) }{2^{m}} \right ) =\frac{1+\alpha }{3\left ( q+1 \right ) } .
	\end{align}
	Denote $\ell _{m} = r_{m} - 2^{m-1}$ then $\ell _{m} = 3\left ( q+1 \right ) \left ( 2^{m} -1 \right ) - 2^{m-1}$. By (\ref{Probability-r_m}) we obtain
	\begin{align*}
		N_{2} \left ( \left ( d_{i} \right ) ,\ell _{m} \right ) 
		&= M_{2}  \left( \left( d_{i}\right),m \right) +\left ( 1+2+\cdots +2^{m-2} \right ) \\
		&= M_{2}  \left( \left( d_{i}\right),m \right)+ 2^{m-1} - 1,
	\end{align*}
	and 
	\begin{align}\label{the limit of N_2/l_m}
		\lim_{m \to \infty} \frac{N_{2} \left ( \left ( d_{i} \right ) ,\ell _{m} \right )}{\ell _{m}} 
		&= \lim_{m \to \infty} \frac{ M_{m,2}  \left( \left( d_{i}\right) \right)  + 2^{m-1} - 1 }{3\left ( q+1 \right ) \left ( 2^{m} -1 \right ) - 2^{m-1}}  \nonumber\\
		&=\frac{1}{6q+5}\left [ 1+2\lim_{m \to \infty} \frac{ M_{m,2}  \left( \left( d_{i}\right) \right) }{2^{m} } \right ] =\frac{1+2\alpha }{6q+5} .
	\end{align}
	By the equations (\ref{the limit of N_2/r_m}) and (\ref{the limit of N_2/l_m}), we conclude
	\begin{align*}
		\frac{1+\alpha }{3\left ( q+1 \right ) } =\frac{1+2\alpha }{6q+5},
	\end{align*}
	which is impossible. Because of by (\ref{N_2}), we have
	\begin{align*}	
		\alpha =\lim_{m \to \infty} \frac{M_{2}\left ( \left ( d_{i} \right ) ,m \right ) }{2^{m}} \le \lim_{m \to \infty} \frac{3q\left ( 2^{m} -1 \right ) }{2^{m}} =\lim_{m \to \infty} 3q\left ( 1-\frac{1}{2^{m}} \right )\le 3q. 
	\end{align*}
	Thus
	\begin{align*}
		\frac{1+\alpha }{3\left ( q+1 \right ) } -\frac{1+2\alpha }{6q+5}=\frac{-\alpha +3q+2}{3\left ( q+1 \right )\left ( 6q+5 \right )  } \ge \frac{2}{3\left ( q+1 \right )\left ( 6q+5 \right ) } >0,
	\end{align*}	
	which implies the frequency of 2s in $ \left ( d_{i}  \right ) $ does not exist. Similarly, the frequency of 0s in $ \left ( d_{i}  \right ) $ also does not exist. Then by (\ref{definition-Lambda_NOT}), $\lambda \in \Lambda _{\mathrm{not}} \left ( x \right ) $.
\end{proof}

\begin{lemma}\label{Lip-Property-in-lambda}
	For $\delta >0$ and $\lambda \in \Lambda \left( x\right)  \cap \left( \beta_{n}, \gamma_{n} \right) $. There exist constants $c_1,c_2>0$ such that for any $\lambda_{1} ,\lambda_{2} \in {\textstyle \sum_{q,n}} \left ( x \right )$ we have 
	\begin{align*}
		c_1 \left | \lambda_{1} -\lambda_{2}  \right | \le \left | \pi _{\lambda } \left ( \Phi _{x} \left ( \lambda_{1}  \right )  \right ) -  \pi _{\lambda } \left ( \Phi _{x} \left ( \lambda_{2}  \right )  \right ) \right | \le c_2 \left | \lambda_{1} -\lambda_{2}  \right |. 
	\end{align*}
\end{lemma}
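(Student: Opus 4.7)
The plan is to exploit a clean algebraic identity that collapses the composition $\pi_{\lambda}\circ\Phi_{x}$ to the hyperbolic map $t\mapsto \lambda x/t$. Given any $\lambda_{i}\in {\textstyle \sum_{q,n}}(x)$, write $(d_{j}^{(i)}):=\Phi_{x}(\lambda_{i})$. The very definition of $\Phi_{x}$ forces $\sum_{j=1}^{\infty} d_{j}^{(i)}/3^{j} = 2x/\lambda_{i}$, so
\begin{align*}
\pi_{\lambda}\bigl(\Phi_{x}(\lambda_{i})\bigr) \;=\; \frac{\lambda}{2}\sum_{j=1}^{\infty}\frac{d_{j}^{(i)}}{3^{j}} \;=\; \frac{\lambda x}{\lambda_{i}}.
\end{align*}

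From this identity I would immediately read off the difference
\begin{align*}
\bigl|\pi_{\lambda}(\Phi_{x}(\lambda_{1}))-\pi_{\lambda}(\Phi_{x}(\lambda_{2}))\bigr| \;=\; \lambda x\left|\frac{1}{\lambda_{1}}-\frac{1}{\lambda_{2}}\right| \;=\; \frac{\lambda x}{\lambda_{1}\lambda_{2}}\,|\lambda_{1}-\lambda_{2}|,
\end{align*}
so the bi-Lipschitz claim reduces to bounding the factor $\lambda x/(\lambda_{1}\lambda_{2})$ above and below by positive constants.

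Since ${\textstyle \sum_{q,n}}(x)\subset \Lambda(x)\cap(\beta_{n},\gamma_{n})$ by construction, and $0<\beta_{n}\le\gamma_{n}<\infty$ by (\ref{gamma_n-and-beta_n}), both $\lambda_{1}$ and $\lambda_{2}$ lie in the bounded interval $(\beta_{n},\gamma_{n})$. Hence $\beta_{n}^{2}<\lambda_{1}\lambda_{2}<\gamma_{n}^{2}$, and the choice
\begin{align*}
c_{1}:=\frac{\lambda x}{\gamma_{n}^{2}},\qquad c_{2}:=\frac{\lambda x}{\beta_{n}^{2}}
\end{align*}
yields $0<c_{1}\le c_{2}<\infty$ together with the two-sided estimate. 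There is essentially no technical obstacle; the only minor point is checking positivity of $\beta_{n}$, which follows at once from $x>0$ and the finiteness of the denominator $\sum_{i=1}^{n}x_{i}/3^{i}+\sum_{i=n+1}^{\infty}2/3^{i}$ used to define $\beta_{n}$. The substantive purpose of this lemma, to be exploited in the subsequent dimension computation for $\Lambda_{\mathrm{not}}(x)$, is to transfer the metric structure on ${\textstyle \sum_{q,n}}(x)\subset\Lambda(x)$ bi-Lipschitzly onto a subset of $E_{\lambda}$, whose dimension can then be estimated through its symbolic coding in $\Omega$.
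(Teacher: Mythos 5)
Your proof is correct and takes essentially the same route as the paper's: both collapse $\pi_{\lambda}\circ\Phi_{x}$ to the map $\lambda_{i}\mapsto \lambda x/\lambda_{i}$ via the defining identity $\sum_{j}d_{j}^{(i)}/3^{j}=2x/\lambda_{i}$, and then bound the factor $\lambda x/(\lambda_{1}\lambda_{2})$ using $\beta_{n}<\lambda_{1},\lambda_{2}<\gamma_{n}$. The only cosmetic difference is that the paper also replaces the fixed $\lambda$ by $\beta_{n}$ and $\gamma_{n}$ in its constants, whereas you keep $\lambda$ explicit; both choices are valid.
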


\begin{proof}
	By using Lemma \ref{bijection}, we obtain that $\Phi_{x}$ is a bijective mapping. Take $\lambda _{1} , \lambda _{2} \in {\textstyle \sum_{q,j}}\left (x\right ) $ with $\lambda _{1} < \lambda _{2} $. Let $\left ( a_{i}  \right ): = \Phi _{x} \left ( \lambda _{1}  \right )$ and $\left (b_{i}  \right ) := \Phi _{x} \left ( \lambda _{2}  \right )  $ then 
	\begin{align*}
		x=\frac{\lambda _1}{2} \sum_{i=1}^{\infty } \frac{a_i}{3^i} ,~~x=\frac{\lambda _2}{2} \sum_{i=1}^{\infty } \frac{b_i}{3^i} .
	\end{align*}
	So
	\begin{align}\label{pi_lambda_Phi_x}
		\left | \pi _{\lambda} \left ( \Phi_x\left ( \lambda_1 \right )  \right ) - \pi _{\lambda} \left ( \Phi_x\left ( \lambda_2 \right )  \right ) \right |
		&=\left | \pi_{\lambda } \left ( \left ( a_i \right )  \right ) - \pi_{\lambda } \left ( \left ( b_i \right )  \right )\right | \nonumber  \\
		&=\left | \frac{\lambda }{2} \sum_{i=1}^{\infty} \frac{a_i}{3^i} - \frac{\lambda }{2} \sum_{i=1}^{\infty} \frac{b_i}{3^i}\right | \nonumber \\
		&= \left | \frac{\lambda }{2} \cdot \frac{2x}{\lambda_1} - \frac{\lambda }{2} \cdot \frac{2x}{\lambda_2} \right | \nonumber \\
		&= \frac{\lambda x}{\lambda_1\lambda_2} \left | \lambda_1 -\lambda_2\right |.
	\end{align}
	By the definition of $ {\textstyle \sum_{q,n}} \left ( x \right )$ we conclude that
	\begin{align*}
		\beta_{n} < \lambda _1 < \lambda_2 < \gamma_{n}~~\mbox{and}~~\beta_{n} < \lambda < \gamma_{n}.
	\end{align*}
	Then the equation (\ref{pi_lambda_Phi_x}) can be written as 
	\begin{align*}
		\frac{\beta_{n} x}{\gamma _{n}^{2} }  \left | \lambda_1 -\lambda_2\right | \le \frac{\lambda x}{\lambda_1\lambda_2} \left | \lambda_1 -\lambda_2\right |\le \frac{\gamma_{n} x }{\beta _{n}^{2} } \left | \lambda_1 -\lambda_2\right |.
	\end{align*}
	That is,
	\begin{align*}
		\frac{\beta_{n} x}{\gamma _{n}^{2} }   \left | \lambda_1 -\lambda_2\right | \le \left | \pi _{\lambda} \left ( \Phi_x\left ( \lambda_1 \right )  \right ) - \pi _{\lambda} \left ( \Phi_x\left ( \lambda_2 \right )  \right ) \right | \le \frac{\gamma_{n} x }{\beta _{n}^{2} } \left | \lambda_1 -\lambda_2\right |.
	\end{align*}
	Set $c_1 = \frac{\beta_{n} x}{\gamma _{n}^{2} } ,c_2 = \frac{\gamma_{n} x }{\beta _{n}^{2} }$ with $c_1,c_2>0$ as desired.
\end{proof}

\begin{lemma}\label{the lower dimension of sigma_q,j_(x)}
	For any $q,n\in \mathbb{N} $, we have 
	\begin{align*}
		\dim_{\mathrm{H}}  {\textstyle \sum_{q,n}} \left ( x \right ) \ge \frac{\left ( q-1 \right ) \log 2 }{\left ( q+1 \right ) \log 3}. 
	\end{align*}
\end{lemma}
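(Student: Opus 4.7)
The approach is to transport the problem into $E_\lambda$ via the bi-Lipschitz map of Lemma \ref{Lip-Property-in-lambda}, and then apply the mass distribution principle to a natural Bernoulli-type measure on the image, which carries an explicit Moran structure. Concretely, I would fix any $\lambda \in \Lambda(x) \cap (\beta_n, \gamma_n)$ and set $F := \pi_\lambda \circ \Phi_x\bigl({\textstyle \sum_{q,n}}(x)\bigr)$. Since $\pi_\lambda \circ \Phi_x$ is bi-Lipschitz on ${\textstyle \sum_{q,n}}(x)$, it suffices to prove $\dim_{\mathrm{H}} F \ge (q-1)\log 2 / ((q+1)\log 3)$. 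For every $k \ge n$, the first $k$ coding digits of any point of $F$ are constrained: the initial segment $x_1 \cdots x_n$ is fixed, and among $d_1, \ldots, d_{k-n}$ there are exactly $L(k)$ \emph{free} positions (those not forced by (\ref{Phi _{x}})). Hence $F$ is covered by $2^{L(k)}$ admissible cylinders, each of diameter at most $(\lambda/2) \cdot 3^{-k}$, and in particular $L(n + r_M) = L_M := 3(q-1)(2^M - 1)$.

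Next, I would define the probability measure $\mu$ on $F$ as the push-forward under $\pi_\lambda$ of the product measure on $\Omega$ that is $(\delta_0 + \delta_2)/2$ on every free coordinate and a Dirac mass at the forced symbol on every constrained coordinate; equivalently, $\mu$ assigns mass $2^{-L(k)}$ uniformly to every admissible level-$k$ cylinder. Given $y \in F$ and small $r > 0$, pick $k$ with $(\lambda/2) \cdot 3^{-(k+1)} \le r < (\lambda/2) \cdot 3^{-k}$. The middle-thirds gap structure of $E_\lambda$ guarantees that $B(y,r)$ meets at most a uniformly bounded number of admissible level-$k$ cylinders, so $\mu(B(y,r)) \le C \cdot 2^{-L(k)}$. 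Combined with the asymptotic estimate
\[
\liminf_{k \to \infty} \frac{L(k)}{k} = \frac{q-1}{q+1},
\]
this yields $\mu(B(y,r)) \le C' r^s$ for every $s < (q-1)\log 2 / ((q+1)\log 3)$, and Frostman's mass distribution principle then delivers the desired lower bound.

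The main obstacle is the uniform ratio estimate for $L(k)/k$. At a block boundary $k = n + r_M$ one has $L_M/r_M = (q-1)/(q+1)$ exactly, with the additive shift $n$ contributing a negligible correction as $M \to \infty$. Inside block $M$, the ratio $L(k)/k$ first grows---during the "free" sub-block of length $2^M \cdot 3q$, where only one of every $q$ positions is forced to be $2$, so locally $L$ increases at rate $(q-1)/q$---and then decays back over the terminal fixed sub-block $0^{2^{M+1}} 2^{2^M}$, where $L$ is constant while $k$ grows. An elementary piecewise computation shows that the minimum of $L(k)/k$ on each interval $[n + r_M, n + r_{M+1}]$ is attained at the two endpoints, both asymptotic to $(q-1)/(q+1)$ as $M \to \infty$. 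This monotonicity analysis on a single block, once established, closes the mass distribution estimate uniformly in $r$ and completes the proof.
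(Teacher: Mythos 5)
Your proposal is correct, and at the top level it follows the same route as the paper: transfer ${\textstyle \sum_{q,n}}(x)$ to $\pi_\lambda\left(\Phi_x\left({\textstyle \sum_{q,n}}(x)\right)\right)\subset E_\lambda$ via the bi-Lipschitz map of Lemma \ref{Lip-Property-in-lambda}, then bound from below the dimension of the resulting Moran-type subset of the scaled Cantor set. The difference lies in how that second step is executed. The paper treats each word set $E_m(q)=\left(\{0,2\}^{q-1}\times\{2\}\right)^{3\cdot 2^m}\times\{0^{2^{m+1}}2^{2^m}\}$ as a single level of a homogeneous Moran construction and quotes the Feng--Wen--Wu dimension formula, evaluating the liminf only along the block boundaries $r_m$. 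You instead put the uniform Bernoulli measure on the free coordinates and run Frostman's mass distribution principle directly, which forces you to control the free-digit count $L(k)$ at \emph{every} level $k$, not just at block ends. That extra work buys something real: since the block lengths $3(q+1)2^m$ grow geometrically, the precise homogeneous Moran lower bound (whose denominator carries the correction term $-\log(c_{k+1}n_{k+1})$) applied to the coarse blocking does not literally return $(q-1)\log 2/((q+1)\log 3)$; one must either refine to a digit-by-digit blocking --- which is exactly your within-block analysis of $L(k)/k$ --- or argue as you do. Your observations that $L(k)/k$ grows at local rate $(q-1)/q$ through the free sub-blocks, decays through the forced tails $0^{2^{m+1}}2^{2^m}$, and attains its minimum $(q-1)/(q+1)+o(1)$ at the boundaries $n+r_m$, combined with the fact that a ball of radius $r<(\lambda/2)3^{-k}$ meets at most two level-$k$ cylinders of $E_\lambda$, close the estimate. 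In short: same strategy, but your version is self-contained and is more careful precisely at the point where the paper's appeal to the Moran formula is loose.
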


\begin{proof}
	Let
	\begin{align*}
		E_{m} \left ( q \right ) {=} \left ( \left \{ 0,2 \right \} ^{q-1} \times \left \{ 2 \right \}  \right )^{3\cdot 2^{m} } \times \left \{ 0^{2^{m+1} } 2^{2^{m} }  \right \} .
	\end{align*}
	By (\ref{Phi _{x}}), we have
	\begin{align}\label{pp}
		\dim_{\mathrm{H}} \pi _{\lambda } \left ( \Phi _{x} \left (  {\textstyle \sum_{q,n}} \left ( x \right ) \right )  \right ) = \dim_{\mathrm{H}} \pi _{\lambda } \left ( \prod_{m=0}^{\infty } E_{m} \left ( q \right ) \right ).
	\end{align}
	Notice that each word in $E_{m} \left ( q \right ) $ has length $3\left ( q+1 \right ) 2^{m} $ and the number of elements in $E_{m} \left ( q \right ) $ is $ 2^{3\cdot 2^{m}\left ( q-1 \right ) } $. Furthermore, $\prod_{m=0}^{+\infty } E_{m} \left ( q \right )$ is the set of infinite sequences by concatenating words from each $E_{m} \left ( q \right ) $. So $\pi _{\lambda} \left ( \prod_{m=0}^{+\infty } E_{m} \left ( q \right ) \right )$ is a homogeneous Moran  set (cf.\cite{DZJ97}) satisfying the strong seqaration condition. Hence,
	\begin{align*}
		\dim_{\mathrm{H}} \pi _{\lambda } \left ( \prod_{m=0}^{+\infty } E_{m} \left ( q \right ) \right )
		&\ge \liminf_{m \to +\infty} \frac{\log \prod_{\ell =0}^{m-1}  2^{3\cdot 2^{\ell }\left( q-1 \right)  } }{\left( \sum_{\ell =0}^{m-1} 3\left ( q+1 \right )2^{\ell } \right) \log 3} \\
		&=\liminf_{m \to +\infty} \frac{\left( \sum_{\ell =0}^{m-1}3\cdot 2^{\ell }\left(q-1 \right) \right) \log 2 }{\left( 3\left ( q+1 \right ) \sum_{\ell =0}^{m-1} 2^{\ell }\right) \log3 } \\
		&=\frac{\left ( q-1 \right )\log 2 }{\left ( q+1 \right )\log3 } .
	\end{align*}
	By Lemma \ref{Lip-Property-in-lambda} and (\ref{pp}),
	\begin{align*}
		\dim _{\mathrm{H} }\left ( \Sigma _{q,n} \left ( x \right )  \right ) =	\dim_{\mathrm{H}} \pi _{\lambda } \left ( \Phi _{x} \left (  {\textstyle \sum_{q,j}} \left ( x \right ) \right )  \right ) \ge \frac{\left ( q-1 \right ) \log 2 }{\left ( q+1 \right ) \log 3 }
	\end{align*}
	as desired.	
\end{proof}

Now, we proof Theorem \ref{dimension-of-NOT}.

\begin{proof}
	By Lemma \ref{the subset of Lambda_not_(x)} and Lemma \ref{the lower dimension of sigma_q,j_(x)}, we obtain 
	\begin{align*}
		\dim_{\mathrm{H}}\left(  \Lambda _{\mathrm{not}} \left( x\right) \right) 
		\ge \dim_{\mathrm{H}} {\textstyle \sum_{q,n}} \left ( x \right ) 
		\ge \frac{\left ( q-1 \right ) \log 2}{ \left ( q+1 \right ) \log 3 } .
	\end{align*}
	Since $q\in \mathbb{N} $ is arbitrary, then
	\begin{align*}
		\dim_{\mathrm{H}}\left( \Lambda _{\mathrm{not}}\left ( x \right )  \right) \ge \frac{\log 2}{ \log 3 } 
	\end{align*}
	as $q\to \infty $. By Corollary \ref{dimension-of-Lambda(x)},
	\begin{align*}
		\dim_{\mathrm{H} }\left ( \Lambda_{\mathrm{not} }\left ( x \right )  \right ) \le \dim_{\mathrm{H}}\left ( \Lambda \left ( x \right )  \right ) =\frac{\log 2}{\log 3}.
	\end{align*}
	Thus
	\begin{align*}		
		\dim_{\mathrm{H} }\left ( \Lambda_{\mathrm{not} }\left ( x \right )  \right ) =\frac{\log 2}{\log 3}
	\end{align*}
	as desired.
\end{proof}

\section{The Hausdorff dimension of \(\Lambda_{p} \left ( x \right )\) }

For any probability vector $\mathbf{p}=\left ( p,1-p \right )$, where $  p\in \left ( 0,1 \right ) $. Define
\begin{align*}
	\Lambda _{p} \left ( x \right ) =\left \{ \lambda \in \Lambda \left ( x \right ):freq_{2} \Phi _{x}\left ( \lambda \right ) = p ,freq_{0} \Phi _{x}\left ( \lambda  \right ) = 1-p \right \} .
\end{align*}
We will construct a measure on $\Lambda _{p} \left ( x \right ) $ and use the Billingsley's Lemma to state the Hausdorff dimension of $\Lambda_{p} \left ( x \right )$.

Given $x>0$ and $ \lambda^{*} \in \Lambda \left ( x \right ) $. Let $\left ( x_{i} \right ): =\Phi _{x} \left ( \lambda^{*} \right ) \in \Omega $, i.e. $ x=\frac{\lambda ^{*} }{2} \sum_{i=1}^{\infty }\frac{x_{i}}{3^{i} }  $. Since $x > 0$, let $n\in \mathbb{N} $ such that $x_{1} \cdots x_{n} \ne 0^{n} $. Denote
\begin{align*}
	\gamma _{n}:= \frac{2x}{\sum_{i=1}^{n} \frac{x_{i}}{3^{i}} } > \frac{2x}{\sum_{i=1}^{n} \frac{x_{i}}{3^{i}} +\sum_{i=n+1}^{\infty }\frac{2}{3^{i}}} =: \beta _{n}.
\end{align*}
Similarly, we can verify $ \lim_{n \to \infty} \gamma _{n} = \lim_{n \to \infty} \beta _{n} = \lambda^{*} $. For any $k,n\in \mathbb{N} $, let $\mathcal {F} _{k,n} \left( x, p \right) $ be the set of $\lambda$ satiesfying 

(i)~~~$\lambda \in \Lambda \left ( x \right ) \cap \left ( \beta_{n}, \gamma_{n} \right )$ such that $\Phi _{x} \left ( \lambda  \right ) = x_{1} x_{2} \cdots x_{n} d_{1} d_{2} \cdots $.

(ii)~~$d_{nk+1} \cdots d_{nk+k} \ne 0^{k} $ and ${freq}_{2}\left ( \left ( d_{i} \right )   \right ) = p $.
\\Then for any $n\in \mathbb{N} $ and $ k\ge 1 $, we have 
\begin{align}\label{the subset of Lambda_p_(x)}
	\mathcal{F} _{k,n} \left ( x, p \right ) \subset \Lambda _{p} \left ( x \right ) \cap \left ( \beta_{n} ,\gamma_{n} \right ) .  
\end{align}

In the following, we estimate the dimension of the set $\mathcal{F} _{k,n} \left ( x, p \right )$.

\begin{lemma}\label{dimension-equal}
	For any $k,n\in \mathbb{N} $ and $\lambda \in \Lambda\left( x\right) \cap \left( \beta _{n},\gamma_{n}\right) $, we have 
	\begin{align*}
		\dim_{\mathrm{H}} \mathcal{F} _{k,n} \left ( x,p \right )  = \dim_{\mathrm{H}} \pi _{\lambda } \left ( \Phi _{x} \left ( \mathcal{F} _{k,n} \left ( x,p \right )  \right )  \right ). 
	\end{align*}
\end{lemma}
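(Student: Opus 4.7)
The plan is to imitate the argument of Lemma \ref{Lip-Property-in-lambda} verbatim, since nothing in that proof uses features of $\Sigma_{q,n}(x)$ other than the containment in $(\beta_n,\gamma_n)$. Concretely, I will show that the composition $\lambda \mapsto \pi_\lambda(\Phi_x(\lambda))$ is a bi-Lipschitz bijection from $\mathcal{F}_{k,n}(x,p)$ onto its image $\pi_\lambda(\Phi_x(\mathcal{F}_{k,n}(x,p)))$, and then invoke the standard fact that bi-Lipschitz maps preserve Hausdorff dimension.

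First I would fix two parameters $\lambda_1,\lambda_2\in \mathcal{F}_{k,n}(x,p)$ and write $(a_i):=\Phi_x(\lambda_1)$, $(b_i):=\Phi_x(\lambda_2)$, so that $\sum_{i\ge 1}a_i/3^i = 2x/\lambda_1$ and $\sum_{i\ge 1}b_i/3^i = 2x/\lambda_2$ by the defining relation of $\Phi_x$. Applying $\pi_\lambda$ and subtracting gives
\begin{align*}
\bigl|\pi_\lambda(\Phi_x(\lambda_1))-\pi_\lambda(\Phi_x(\lambda_2))\bigr|
= \frac{\lambda}{2}\left|\frac{2x}{\lambda_1}-\frac{2x}{\lambda_2}\right|
= \frac{\lambda x}{\lambda_1\lambda_2}\,|\lambda_1-\lambda_2|,
\end{align*}
which is the exact identity already derived in Lemma \ref{Lip-Property-in-lambda}.

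Next I would use the definition of $\mathcal{F}_{k,n}(x,p)$, which forces $\lambda_1,\lambda_2\in(\beta_n,\gamma_n)$, together with the hypothesis $\lambda\in(\beta_n,\gamma_n)$, to bound the coefficient $\lambda x/(\lambda_1\lambda_2)$ between the positive constants $\beta_n x/\gamma_n^2$ and $\gamma_n x/\beta_n^2$. This yields
\begin{align*}
\frac{\beta_n x}{\gamma_n^2}\,|\lambda_1-\lambda_2|
\;\le\; \bigl|\pi_\lambda(\Phi_x(\lambda_1))-\pi_\lambda(\Phi_x(\lambda_2))\bigr|
\;\le\; \frac{\gamma_n x}{\beta_n^2}\,|\lambda_1-\lambda_2|,
\end{align*}
so $\lambda\mapsto \pi_\lambda(\Phi_x(\lambda))$ is bi-Lipschitz on $\mathcal{F}_{k,n}(x,p)$. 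Bijectivity onto the image is immediate, as $\Phi_x$ is a bijection by Lemma \ref{bijection} and $\pi_\lambda$ is a bijection from $\Omega$ onto $E_\lambda$ by the open set condition. Hausdorff dimension is invariant under bi-Lipschitz maps, so the desired equality follows.

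I do not anticipate any serious obstacle: this lemma is essentially a reformulation of Lemma \ref{Lip-Property-in-lambda} in the new setting, and the only thing to verify is that the containment $\mathcal{F}_{k,n}(x,p)\subset (\beta_n,\gamma_n)$ (which is built into condition (i) of the definition) gives the uniform bounds on $\lambda_1\lambda_2$. The sole minor care needed is to notice that the right-hand side of the displayed inequality above is an absolute constant once $n$ and $\lambda$ are fixed, which is precisely what is required for the bi-Lipschitz conclusion.
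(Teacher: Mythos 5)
Your proposal is correct and follows essentially the same route as the paper, which itself just invokes the argument of Lemma \ref{Lip-Property-in-lambda} to obtain the bi-Lipschitz bounds $c_1|\lambda_1-\lambda_2|\le|\pi_\lambda(\Phi_x(\lambda_1))-\pi_\lambda(\Phi_x(\lambda_2))|\le c_2|\lambda_1-\lambda_2|$ and concludes by bi-Lipschitz invariance of Hausdorff dimension. You have merely written out the details the paper leaves implicit.
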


\begin{proof}
	The same argument of \ref{Lip-Property-in-lambda}, there exist constants $c_{2}>c_{1}>0$ such that for any $\lambda _{1},\lambda_{2} \in \mathcal{F} _{k,n} \left ( x,p \right )$, we have
	\begin{align*}
		c_{1} \left | \lambda_{1} - \lambda_{2} \right | \le \left | \pi_{\lambda }\left ( \Phi_{x}\left ( \lambda_1 \right )  \right )-\pi_{\lambda }\left ( \Phi_{x}\left ( \lambda_2 \right )  \right )  \right | \le c_{2} \left | \lambda_{1} - \lambda_{2} \right |
	\end{align*}
	as desired.
\end{proof}

Next, we consider the lower bound dimension of $\pi _{\lambda } \left ( \Phi _{x} \left ( \mathcal{F} _{k,n} \left ( x, p \right )  \right )  \right ) $. For any $k,n\in \mathbb{N} $ define a measure $\widehat{\mu } _{k,n}$ on the tree
\begin{align*}
	\mathrm{T}_{k,n} \left ( x \right ) := \left \{ x_{1} \cdots x_{n} d_{1} d_{2} \cdots : ~d_{nk+1} \cdots d_{nk+k} \ne 0^{k}~\forall~n\ge 0 \right \}.
\end{align*}
And for any $k\in \mathbb{N} $, let $\theta _{k} $ satiesfy 

\begin{align}\label{def-of-theta_k}
	\frac{1-\theta _{k}}{1-\left ( \theta _{k} \right )^{k} } = p .
\end{align}

\begin{lemma}\label{limit-of-theta_k}
	$\lim_{k \to \infty} \theta _{k}=1-p.$
\end{lemma}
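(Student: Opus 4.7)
The plan is to rewrite the defining equation in a more tractable form and then squeeze $\theta_k$ between $1-p$ and $1-p+\varepsilon$ for arbitrarily small $\varepsilon$. The key identity is the geometric sum
\begin{align*}
  1-\theta_k^k = (1-\theta_k)\bigl(1+\theta_k+\theta_k^2+\cdots+\theta_k^{k-1}\bigr),
\end{align*}
so the definition \eqref{def-of-theta_k} is equivalent to $g_k(\theta_k) = 1/p$, where $g_k(t) := \sum_{i=0}^{k-1}t^i$. Since $g_k$ is strictly increasing on $(0,\infty)$ with $g_k(0)=1 < 1/p$ and $g_k(1)=k$, for every $k > 1/p$ there exists a unique $\theta_k\in(0,1)$ solving the equation. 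This takes care of well-definedness and puts $\theta_k$ in a convenient range.

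Next I would establish the lower bound $\theta_k > 1-p$. Evaluating the closed form of the geometric sum at $t=1-p\in(0,1)$ gives
\begin{align*}
  g_k(1-p) = \frac{1-(1-p)^k}{1-(1-p)} = \frac{1-(1-p)^k}{p} < \frac{1}{p} = g_k(\theta_k),
\end{align*}
so the monotonicity of $g_k$ yields $\theta_k > 1-p$. For the upper bound, fix $\varepsilon\in(0,p)$ so that $1-p+\varepsilon\in(0,1)$. Again by the closed form,
\begin{align*}
  g_k(1-p+\varepsilon) = \frac{1-(1-p+\varepsilon)^k}{p-\varepsilon} \longrightarrow \frac{1}{p-\varepsilon} > \frac{1}{p} \quad (k\to\infty),
\end{align*}
since $1-p+\varepsilon<1$. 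Hence for all sufficiently large $k$ one has $g_k(1-p+\varepsilon) > 1/p = g_k(\theta_k)$, and monotonicity of $g_k$ forces $\theta_k < 1-p+\varepsilon$.

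Combining the two bounds gives $1-p < \theta_k < 1-p+\varepsilon$ for all sufficiently large $k$, and since $\varepsilon>0$ is arbitrary, $\theta_k \to 1-p$ as claimed. There is no real obstacle here: the whole argument boils down to the elementary observation that $g_k(t)$ converges pointwise to $1/(1-t)$ on $(0,1)$, together with the strict monotonicity of $g_k$ which transports this limit to the level set $\{g_k=1/p\}$.
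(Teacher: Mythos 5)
Your proof is correct, but it takes a genuinely different route from the paper's. The paper first shows that the sequence $\left\{\theta_k\right\}$ is strictly decreasing in $k$ (by subtracting the defining equations for consecutive $k$ and factoring $\theta_{k+1}^{i}-\theta_k^{i}$), checks that $\theta_{k_0}<1$ for $k_0=\left\lfloor 1/p\right\rfloor+2$, deduces from monotone boundedness that a limit $\theta$ exists, and only then identifies $\theta=1-p$ by passing to the limit in $\sum_{i=0}^{k-1}\theta_k^{i}=1/p$ to get $\sum_{i=0}^{\infty}\theta^{i}=1/(1-\theta)=1/p$. You instead fix $k$ and exploit monotonicity of $g_k(t)=\sum_{i=0}^{k-1}t^{i}$ in the variable $t$, evaluating the closed form at the test points $1-p$ and $1-p+\varepsilon$ to squeeze $\theta_k$ directly. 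Your version buys two things: it avoids having to prove monotonicity in $k$ and the a priori existence of the limit, and it sidesteps the double-limit interchange $\lim_{k}\sum_{i=0}^{k-1}\theta_k^{i}=\sum_{i=0}^{\infty}\theta^{i}$, which the paper asserts without comment (it is justifiable there because $\theta_k$ decreases to a limit strictly below $1$, but it deserves a word). You also obtain the one-sided bound $\theta_k>1-p$ for every admissible $k$, which is sharper information than the paper extracts. The paper's approach, for its part, yields the monotonicity of $\left\{\theta_k\right\}$ itself, which is not needed elsewhere. Both arguments are sound; yours is the more self-contained of the two.
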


\begin{proof}
	For any $k\in \mathbb{N}_{\ge 2}$,
	\begin{align*}
		1+\theta _{k}+\theta _{k}^{2}+\cdots + \theta _{k}^{k-1}=\frac{1}{p}=1+\theta _{k+1}+\theta _{k+1}^{2}+\cdots + \theta _{k+1}^{k}=\frac{1}{p}. 
	\end{align*}	
	Then
	\begin{align*}
		\left ( \theta _{k+1}- \theta _{k} \right )+\left ( \theta _{k+1}^{2}- \theta _{k}^{2} \right )+ \cdots +\left ( \theta _{k+1}^{k-1}- \theta _{k}^{k-1} \right )+ \theta _{k+1}^{k}=0.
	\end{align*}
	Since
	\begin{align*}
		\theta _{k+1}^{i}- \theta _{k}^{i} =\left ( \theta _{k+1}- \theta _{k} \right ) \left ( \theta _{k+1}^{i-1}+\theta _{k+1}^{i-2} \theta _{k} +\cdots + \theta _{k+1} \theta _{k+1}^{i-2} +\theta _{k}^{i-1} \right ) 
	\end{align*}
	and for any $k\in \mathbb{N}_{\ge 2}$ we have $\theta_{k}>0$, then $\theta _{k+1}<\theta _{k}$.
	
	Let $k_{0}=\left \lfloor \frac{1}{p}  \right \rfloor +2$. And for any $k\in \mathbb{N}_{\ge k_{0}} $ denote $\theta _{k}$ by
	\begin{align}\label{equation-theta_k}
		1+\theta _{k}+\theta _{k}^{2}+\cdots + \theta _{k}^{k-1}=\frac{1}{p}.
	\end{align}	
	Then for any $k\in \mathbb{N}_{\ge k_{0}}$, $\theta_{k}>0$. Since $k_{0}=\left \lfloor \frac{1}{p}  \right \rfloor +2$, $\theta_{k_{0}}<1$. If not, we assume $\theta_{k_{0}}\ge 1$ then
	\begin{align*}
		1+\theta _{k_{0}} +\cdots + \theta _{k_{0}}^{k_{0}-1}\ge 1+k_{0}-1=k_{0}=\left \lfloor \frac{1}{p} \right \rfloor +2>\frac{1}{p},
	\end{align*}
	which is a contradiction. By the monotonic decreasing property of $\left \{ \theta_{k} \right \}$ , we obtain the limit of $\left \{ \theta_{k} \right \}$ exists, denoted by $\theta_{0}$. Then $\theta _{0}\le \theta_{k}<1$.
	Taking the limit of both sides of the equation (\ref{equation-theta_k}), we obtain
	\begin{align*}
		\frac{1}{p}= \sum_{i=0}^{\infty } \theta ^{i} = \frac{1}{1-\theta } .
	\end{align*}
	Thus $\theta =\lim_{k \to \infty} \theta _{k}=1-p.$	
\end{proof}

Let $\left ( p_{0}, p_{2} \right ) = \left ( \theta _{k} ,1-\theta _{k} \right ) \left( k\in \mathbb{N}_{\ge k_{0}} \right)  $ be a probability vector. For any cylinder on $\mathrm{T}_{k,n} \left ( x \right ) $, $\widehat{\mu } _{0} \left ( \left [ x_{1} \cdots x_{n}  \right ]  \right ) = 1 $. For any $ n\ge 1$, 

\begin{align}\label{the measure mu}
	\widehat{\mu } _{n} \left ( \left [ x_{1} \cdots x_{n} d_{1} \cdots d_{nk}  \right ]  \right ) =\prod_{i=0}^{n-1} \frac{p_{d_{ik+1} }p_{d_{ik+2} }\cdots p_{d_{ik+k} } }{1-p_{0}^{k}}.
\end{align}

By \cite{GB99}, there exists a unique probability measure $\widehat{\mu } =\widehat{\mu }_{k,n} $ on $ \mathrm{T}_{k,n} \left ( x \right ) $ such that for any $ n\ge 0 $, we have
\begin{align*}
	\widehat{\mu } \left ( \left [ x_{1} \cdots x_{n} d_{1} \cdots d_{nk}  \right ]  \right ) = \widehat{\mu } _{n} \left ( \left [ x_{1} \cdots x_{n} d_{1} \cdots d_{nk} \right ]  \right ).
\end{align*}

\begin{lemma}\label{full measure}
	$\widehat{\mu } \left(  \Phi _{x} \left ( \mathcal{F} _{k,n} \left ( x, p \right )  \right ) \right) =1.$
\end{lemma}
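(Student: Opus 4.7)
The strategy is to use the product structure of $\widehat{\mu}$ and apply the strong law of large numbers (SLLN) to the i.i.d.\ blocks defined by $\widehat{\mu}$.

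First I would observe that by the definition in (\ref{the measure mu}), if we write every sequence in $T_{k,n}(x)$ as $x_1\cdots x_n B_0 B_1 B_2 \cdots$, where $B_j := d_{jk+1}\cdots d_{jk+k}$ is the $j$-th block of length $k$, then under $\widehat{\mu}$ the blocks $\{B_j\}_{j\ge 0}$ are i.i.d.\ random variables taking values in $\{0,2\}^{k}\setminus\{0^k\}$, each block $(e_1,\ldots,e_k)$ receiving mass $p_{e_1}p_{e_2}\cdots p_{e_k}/(1-p_0^k)$ with $(p_0,p_2) = (\theta_k,1-\theta_k)$. The normalization $1/(1-p_0^k)$ automatically guarantees that $\widehat{\mu}$-almost every sequence satisfies $B_j\ne 0^k$ for every $j\ge 0$, hence lies in $T_{k,n}(x)$.

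Next, I would compute the expected number of $2$'s inside a single block. By linearity and symmetry,
\begin{align*}
\mathbb{E}_{\widehat{\mu}}\bigl[\#\{1\le i\le k : e_i=2\}\bigr] = \frac{k\,(1-\theta_k)}{1-\theta_k^{k}} = kp,
\end{align*}
where the last equality is exactly the defining relation (\ref{def-of-theta_k}) of $\theta_k$. This identity is the reason $\theta_k$ was chosen as it was. Applying the SLLN to the i.i.d.\ sequence of (integer-valued, bounded) random variables $Y_j := \#\{1\le i\le k : B_j(i)=2\}$, I get that for $\widehat{\mu}$-a.e.\ sequence
\begin{align*}
\lim_{N\to\infty}\frac{1}{Nk}\sum_{j=0}^{N-1} Y_j = \frac{\mathbb{E}_{\widehat{\mu}}[Y_0]}{k} = p.
\end{align*}

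To pass from the subsequence $\{Nk\}$ to the full sequence of indices I would use the elementary fact that for any integer $M$ with $Nk\le M<(N+1)k$ the partial counts of $2$'s and the total index both differ from $Nk$ and $\sum_{j=0}^{N-1}Y_j$ by at most $k$, so the ratio still converges to $p$. This gives $freq_2((d_i))=p$ for $\widehat{\mu}$-a.e.\ sequence, and since $d_i\in\{0,2\}$, automatically $freq_0((d_i))=1-p$. Combined with the a.s.\ membership in $T_{k,n}(x)$, every condition defining $\Phi_x(\mathcal{F}_{k,n}(x,p))$ is satisfied $\widehat{\mu}$-a.s., which yields $\widehat{\mu}\bigl(\Phi_x(\mathcal{F}_{k,n}(x,p))\bigr)=1$.

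The only real obstacle is verifying the block expectation formula and noticing that it collapses to $kp$ precisely by the choice of $\theta_k$; once that is in place, the rest is a direct application of the SLLN for i.i.d.\ random variables and a routine truncation argument to remove the restriction to multiples of $k$.
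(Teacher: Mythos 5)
Your proof is correct and follows essentially the same route as the paper: decompose $\widehat{\mu}$ into i.i.d.\ length-$k$ blocks, apply the strong law of large numbers, and use the defining relation (\ref{def-of-theta_k}) of $\theta_k$ to identify the limiting frequency with $p$. The only differences are cosmetic — you compute the block expectation by linearity of expectation rather than by rearranging the binomial sum $\sum_{l=1}^{k}\frac{l}{k}\binom{k}{l}\theta_k^{k-l}(1-\theta_k)^l$ as the paper does, and you make explicit the interpolation from the subsequence $Nk$ to all indices, a step the paper leaves implicit.
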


\begin{proof}
	Notice $\Phi _{x} \left ( \mathcal{F}_{k,n} \left ( x, p \right ) \right ) \subset \mathrm{T}_{k,n} \left ( x \right )$. Since $\widehat{\mu } \left ( \mathrm{T}_{k,n} \left ( x \right )  \right ) =1$, it suffices to show that for any $\left ( c_{i} \right ) \in \mathrm{T}_{k,n} \left ( x \right )$ we have $freq_{2}\left ( \left ( c_{i}  \right )  \right ) = p, \widehat{\mu }-a.e. $.
	
	Note that for any $\left( c_{i}\right) \in \mathrm{T}_{k,n} \left ( x \right )$, $c_{1} \cdots c_{n} = x_{1} \cdots x_{n}$. And for any $N \in \mathbb{N}$ on $\widehat{\mu }$, $c_{n+Nk+1} \cdots c_{n+Nk+k}$ are independent and identically distributed. Thus by Law of Large Numbers (c.f\cite{R19}), we conclude that for any $\left ( c_{i} \right ) \in \mathrm{T}_{k,n} \left ( x \right ), \widehat{\mu }-a.e.$, 
	\begin{align*}
		freq_{2} \left ( \left ( c_{i}  \right )  \right ) &=\lim_{N \to +\infty} \frac{\# \left \{ 1\le i\le N:c_{i} = 2 \right \}  }{N} \\
		& =\lim_{N \to +\infty} \frac{\# \left \{ n+1 \le i\le n+Nk :c_{i} = 2 \right \}  }{Nk} \\
		& =\sum_{d_{1}\cdots d_{k} \ne 0^{k} } \frac{\#\left \{ 1\le i\le k:d_{i} =2 \right \} }{k} \widehat{\mu } \left ( \left [ x_{1} \cdots x_{n} d_{1} \cdots d_{k} \right ]  \right )\\
		&=\sum_{l=1}^{k} \frac{l}{k} \begin{pmatrix}
			k  \\
			l
		\end{pmatrix}
		\frac{\left( \theta _{k}\right)^{k-l}  \left ( 1-\theta _{k}  \right ) ^{ l } }{1-\left ( \theta _{k} \right ) ^{k} } .
	\end{align*}
	Rearrange the above summation expression and by (\ref{def-of-theta_k}), we obtain 
	\begin{align*}
		freq_{2} \left ( \left ( c_{i}  \right )  \right ) &= \frac{1}{1-\left ( \theta _{k}  \right ) ^{k} } \sum_{l =1 }^{k} \frac{\left ( k-1 \right ) ! }{\left ( l -1 \right ) ! \left ( k-l  \right ) ! } \left ( \theta _{k} \right )  ^{k-l }  \left ( 1-\theta _{k}  \right ) ^{l} \\
		&= \frac{1-\theta _{k}}{1-\left ( \theta _{k} \right ) ^{k} } = p
	\end{align*}
	as desired.
\end{proof}

Now, we prove Theorem \ref{dimension-of-p}.

\begin{proof}
	By Lemma \ref{full measure}, the set $\pi _{\lambda } \left( \Phi _{x} \left ( \mathcal{F} _{k,n} \left ( x, p \right ) \right)  \right ) $ has a full measure $\mu := \widehat{\mu } \circ \pi _{\lambda }^{-1} $. For any $y=\pi _{\lambda } \left ( x_{1} \cdots x_{n} d_{1} d_{2} \cdots \right ) \in \pi _{\lambda } \left(  \Phi _{x} \left ( \mathcal{F} _{k,n} \left ( x, p \right ) \right)  \right ),\mu-a.e.$, 
	\begin{align*}
		\liminf_{r \to 0^{+} } \frac{\log \mu \left ( B\left ( y,r \right )  \right ) }{\log r} 
		& = \liminf_{N \to +\infty } \frac{\log \widehat{\mu } \left ( \left [ x_{1} x_{2} \cdots x_{n} d_{1} d_{2} \cdots d_{Nk} \right ]  \right )  }{\log 3^{-\left( n+Nk\right) }  } \\
		& = \liminf_{N \to \infty} \frac{\log \prod_{i=0}^{N-1} \left ( \frac{p_{d_{ik+1} }p_{d_{ik+2} }\cdots p_{d_{ik+k}}}{1-p_{0}^{k} }  \right ) }{\log 3^{-\left( n+Nk\right) } } \\
		& = \liminf_{N \to \infty} \frac{\log \prod_{i=1}^{Nk}p_{d_{i} } -\log \left ( 1-p_{0}^{k}  \right )^{N}   }{-\left ( n+Nk \right ) \log 3 }  \\
		& = \liminf_{N \to \infty} \frac{M_{2} \left ( Nk \right ) \log p_{2}+ M_{0} \left ( Nk \right ) \log p_{0} -N\log \left ( 1- p_{0}^{k}  \right ) }{ -Nk \log 3},
	\end{align*}
	where $M_{i} \left ( Nk \right ) \left( i=0,2\right) $ denote the number of digits $i\left ( i=0,2 \right ) $ in $d_{1} \cdots d_{Nk} $. By Lemma \ref{full measure} for any $y\in \pi _{\lambda } \left( \Phi _{x} \left ( \mathcal{F} _{k,n} \left ( x, p \right )  \right ) \right),\mu-a.e.$, we have 
	\begin{align*}
		\liminf_{r \to 0^{+} } \frac{\log \mu \left ( B\left ( y,r \right )  \right ) }{\log r} 
		=\frac{\frac{1-\theta _{k} }{1-\left ( \theta _{k}  \right )^{k}  }\log \left( 1-\theta _{k}\right) +\frac{\theta _{k}- \left ( \theta ^{k}  \right ) ^{k}  }{1-\left ( \theta ^{k}  \right ) ^{k} } \log \theta _{k} }{-\log 3 } -\frac{\log\left ( 1-\left ( \theta _{k}  \right )^{k}   \right ) }{-k\log 3 } .
	\end{align*}
	By (\ref{def-of-theta_k}) and Lemma \ref{limit-of-theta_k}, we have 
	\begin{align*}
		\lim_{k \to \infty} \liminf_{r \to 0^{+}} \frac{\log \mu \left ( B\left ( y,r \right )  \right ) }{\log r}=\frac{p \log p +\left ( 1-p \right ) \log \left ( 1-p \right ) }{ -\log 3 } =\frac{\mathrm{h}\left ( p, 1-p \right ) }{ \log 3 },
	\end{align*}
	where $\mathrm{h}\left ( p, 1-p \right )$ denotes the entropy function of $ \mathbf{p}=\left ( p,1-p \right )  $. By \cite{P61} for any $n\ge 1$, we obtain 
	\begin{align*}
		\dim_{\mathrm{H}} \pi _{\lambda } \left(  \Phi _{x} \left ( \mathcal{F} _{k,n} \left ( x, p \right )  \right )\right)  \ge \frac{\mathrm{h} \left ( p, 1-p \right ) }{ \log 3 } . 
	\end{align*}
	So by (\ref{the subset of Lambda_p_(x)}) and Lemma \ref{dimension-equal}, we conclude 
	\begin{align*}
		\dim _{\mathrm{H} } \left ( \Lambda_{p} \left ( x \right )\right) \ge \dim _{\mathrm{H} } \left ( \Lambda_{p} \left ( x \right )\cap \left ( \beta_{n} ,\gamma_{n} \right )  \right ) \ge \dim _{\mathrm{H}}  \mathcal{F} _{k,n} \left ( x, p \right ) \ge \frac{\mathrm{h} \left ( p, 1-p \right ) }{ \log 3 }
	\end{align*}	
	as desired.
\end{proof}

\end{document}